\def\proof{\noindent {\bf Proof }}
\def\Fq{{\mathbb F}_q}
\def\Zq-1{{\mathbb Z}_{q-1}}
\def\alp{{\alpha}}
\def\qed{~~\vrule height8pt width4pt depth0pt}
\def\Mcal{{\cal M}}
\def\Ical{{\cal I}}
\def\F2{{\mathbb F}_2}
\def\eps{{\varepsilon}}
\def\Mcal{{\cal M}}
\def\Ecal{{\cal E}}
\newtheorem{prop}{Proposition}
\newtheorem{lemma}{Lemma}
\newtheorem{thm}{Theorem}
\newtheorem{cor}{Corollary}
\newtheorem{example}{Example}
\newcommand{\be}{\begin{equation}}
\newcommand{\ee}{\end{equation}}
\newcommand{\beq}{\begin{eqnarray}}
\newcommand{\eeq}{\end{eqnarray}}
\newcommand{\beqn}{\begin{eqnarray*}}
\newcommand{\eeqn}{\end{eqnarray*}}
\newcommand{\fq}{{\mathbb F}_{q}}
\newcommand{\ftwo}{{\mathbb F}_{2}}
\newcommand{\Jcal}{{\cal J}}
\newcommand{\Rcal}{{\cal R}}
\newcommand{\Scal}{{\cal S}}
\newcommand{\om}{{\omega}}
\begin{document}

\title {Counting irreducible polynomials with prescribed coefficients over a finite field }
 \author{ Zhicheng Gao, Simon Kuttner, and Qiang Wang \footnote{Z. Gao's research is partially supported by NSERC of Canada(RGPIN 04010-2015) and Carleton University Development Grant (189035).  S. Kuttner is partially supported by an NSERC USRA.  Q. Wang's research is partially supported by  NSERC of Canada (RGPIN-2017-06410).}\\
School of Mathematics and Statistics\\
Carleton University\\
Ottawa, Ontario\\
Canada K1S5B6 \\
Email: \{zgao, wang\}@math.carleton.ca }
\maketitle

\begin{abstract}
We continue our study on counting irreducible polynomials over a finite field with prescribed coefficients. We set up a general combinatorial framework
using generating functions with coefficients from a group algebra which is generated by equivalent classes of polynomials with prescribed coefficients.  Simplified expressions  are derived for some special cases.
Our results extend some earlier results.
\end{abstract}

\section{ Introduction}

Let $q$ be a prime power and  $\mathbb{F}_q$ be a finite field of $q$ elements. The problem of
estimating the number of irreducible polynomials
of degree $d$ over the finite field ${\mathbb F}_{q}$
with some prescribed coefficients has been largely
studied; see surveys by S. D. Cohen such as  \cite{Cohen1} and Section 3.5  in \cite{Handbook} and references therein for more details.  Asymptotic results were answered in the most generality by Cohen \cite{Cohen0}.
Regarding to the exact  formulae or expression,  Carlitz~\cite{Car52} and Kuz'min~\cite{Kuz90}
gave the number of  monic irreducible polynomials with the
first coefficient prescribed and the first two
coefficients prescribed, respectively; see~\cite{CMRSS03, RMS01}
for a similar result over $\ftwo$, and~\cite{MoiRan08, Ri14}
for more general results.  Yucas and Mullen~\cite{YucMul04}
and Fitzgerald and Yucas~\cite{fityuc} considered  the
number of irreducible polynomials of degree $d$ over
$\ftwo$ with the first three coefficients prescribed.
Over any finite field $\fq$, Yucas~\cite{Yuc06} gave
the number of irreducible polynomials with prescribed
first or last coefficient. In \cite{YucMul04} Yucas and Mullen studied the number of irreducible polynomials over ${\mathbb  F}_2$ with the first three coefficients, and they stated: ``It would be interesting to know whether the methods and techniques of \cite{Kuz90} could be extended and used to generalize both our formulas and those of [4] to formulas for
arbitrary finite fields, and/or to the case over $\mathbb{F}_2$ where more than three coefficients
are specified in advance." Recently,   Lal\'{i}n and Larocque  \cite{LalLar16} used elementary combinatorial methods, together with the theory of quadratic forms, over finite fields
to obtain the formula, originally due to Kuz'min \cite{Kuz90}, for the
number of monic irreducible polynomials of degree $n$ over a
finite field $\fq$ with the first two prescribed coefficients.  Also, an explicit expression for  the number of irreducible polynomials over $\mathbb{F}_{2^r}$ with the first three coefficients prescribed zero was given by Ahmadi et al  \cite{Ahmadi16}; the proofs involve counting the number of points on certain algebraic curves over finite fields which are supersingular. More recently,  Granger \cite{Gra19} carried out a systematic study on the problem with several prescribed leading coefficients.
Through a transformation of the problem of counting the number of elements of $\mathbb{F}_{q^n}$ with prescribed traces into the problem of counting the number of elements for which linear combinations of the trace functions evaluate to $1$,  he converted the problem into counting points in Artin-Schreier curves of smaller genus and then computed the corresponding zeta functions using Lauder-Wan algorithm \cite{LW}.   In particular, he presented an efficient deterministic algorithm which outputs
exact expressions in terms of the degree $n$ for the number of monic degree
n irreducible polynomials over $\fq$ of characteristic $p$ for which
the first  $\ell < p$ coefficients are prescribed, provided that $n$ is coprime to $p$.

 In this paper we use the generating function approach, which is initiated in \cite{GKW21}, to study the problem with several prescribed leading and/or ending coefficients.  We study the group of equivalent classes for these polynomials with prescribed coefficients  and extend ideas from  Hayes  \cite{Hay65} and Kuz'min \cite{Kuz90}.  We also note that a similar idea was used by Fomenko   \cite{Fom96} to study the $L$ functions  for the number of  irreducible polynomials over $\mathbb{F}_2$ with prescribed three coefficients, and for the case such as  prescribed $\ell$ coefficients with $\ell < p$.  Using primitive idempotent decomposition for finite abelian group algebras,  we can obtain  general expressions for the generating functions over group algebras.  This provides us a recipe to obtain an explicit formulae for the number of monic irreducible polynomials  with prescribed leading coefficients, as well as prescribed ending coefficients.  We demonstrate our method by computing these numbers for several concrete examples.   Our method is also computationally simpler than that of Granger \cite{Gra19} in the case of prescribed leading coefficients only and it produces simpler formulas in some cases.

The rest of the paper is organized as follows. In Section~\ref{main} we described our generating function method and derive our main results. In Section~\ref{TypeI} we apply our main theorem to obtain new compact expressions for some examples with prescribed leading and ending coefficients. In Section~\ref{TypeII} we apply our main theorem to obtain compact expressions for some examples with prescribed leading coefficients and compare them with previous known results. The conclusion is in Section~\ref{conclusion}.

\section{Combinatorial framework for counting irreducible polynomials with prescribed coefficients} \label{main}

In this section, we describe our general combinatorial framework for counting irreducible polynomials with prescribed coefficients,
using generating functions with coefficients from a group algebra. This extends Kuz'Min and Hayes's  idea \cite{Kuz90, Hay65}
for polynomials with two prescribed leading coefficients.
Fix positive integers $\ell$ and $t$.
Given a polynomial  $f=x^m+f_1x^{m-1}+\cdots +f_{m-1}x+f_m$, we shall call $f_1,\ldots, f_{\ell}$ the first $\ell$ leading coefficients, and $f_m,\ldots,f_{m-t+1}$ the ending coefficients.
When we read the leading coefficients from left to right, missing coefficients are interpret as zero.
Similarly, we read the ending coefficients from right to left, and interpret the missing coefficients as 0. Thus the leading and ending coefficients of $f$
are the same as those of
\[
\sum_{j=0}^{\ell}f_jx^{\ell+t-j}+\sum_{j=0}^{t-1}f_{m-j}x^j,
\]
where $f_0:=1$, and $f_j:=0$ if $j<0$ or $j>m$.

We shall treat the following two different types. Let $\Mcal$ denote the set of monic polynomials  over $\Fq$,
$\Mcal_d$ consisting of those monic polynomials of degree $d$, and let $\deg(f)$ denote the degree of a polynomial $f$.

\medskip

\noindent Type~I.  We wish to prescribe $\ell $ leading coefficients $a_1,\ldots,a_{\ell}$, and $t$ ending coefficients $b_0,\ldots,b_{t-1}$ with the constant term $b_0\ne 0$.
 Two monic polynomials $f,g\in \Mcal$, with $f(0)g(0)\ne 0$, are said to be equivalent if they have the same $\ell$ leading and $t$ ending coefficients.
 Thus the polynomial $f=1$ is equivalent to $x^{\ell+t}+1$, and each equivalence class is represented by a unique monic polynomial of degree $\ell+t$.  We recall the reciprocal of $f$ is the polynomial of the form $x^{\deg(f)} f(x^{-1})$ and denote the reciprocals of $f$  and $g$ by $\tilde{f}$ and $\tilde{g}$ respectively.
The type I equivalence relation $f\sim g$  can be written as
\begin{align}
\tilde{f} \equiv \tilde{g} \pmod{x^{\ell+1}},  \mbox{and }
f\equiv g \pmod{x^{t}}.  \label{eq:II}
\end{align}
We emphasize that $\tilde{f}(0) = \tilde{g}(0)= 1$ and $f(0)g(0)\ne 0$.

\medskip

\noindent Type~II.   We wish to prescribe $\ell $ leading coefficients. Two monic polynomials $f,g\in \Mcal$ are said to be equivalent if they have the same $\ell$ leading coefficients.  We may write the type II equivalence relation $f\sim g$ as
\[
\tilde{f} \equiv \tilde{g} \pmod{x^{\ell+1}}.
\]


 In this case the polynomial $f=1$ is equivalent to $x^{d}$ for any $d>0$, and each  equivalence class is represented by a unique monic polynomial of degree $\ell$.


It is not difficult to see that this multiplication is well defined (independent of the choice of representatives) because the leading and ending coefficients of $fg$ are determined by the leading and ending coefficients of $f$ and $g$. 
We note that  the set  $\Ecal$ of  type~II equivalence classes is a group under the usual multiplication; see, e.g., \cite{Fom96}.  

\begin{prop}\label{group}
For both types,  $\Ecal$  is an abelian group under the multiplication $\langle f\rangle \langle g\rangle =\langle fg\rangle$ with $\langle 1\rangle$ being the identity element. Moreover, the following holds.
\begin{itemize}
\item[(I)] For  type~I, we have $|\Ecal|=(q-1)q^{\ell+t-1}$. Also, for each $d\ge \ell+t$ and each $\eps\in \Ecal$, there are exactly $q^{d-\ell-t}$ polynomials in $\Mcal_d$ which are equivalent to $\eps$.
\item[(II)] For  type~II, we have $|\Ecal|=q^{\ell}$. Also, for each $d\ge \ell$ and each $\eps\in \Ecal$, there are exactly $q^{d-\ell}$ polynomials in $\Mcal_d$ which are equivalent to $\eps$.
\end{itemize}
\end{prop}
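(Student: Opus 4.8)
The plan is to realize $\Ecal$ explicitly, for both types, as (a product of) unit groups of truncated polynomial rings via the reciprocal map; this one construction yields the group structure, the order, and the counting statement at once. Among the group axioms, only the existence of inverses is not immediate: closure is the well-definedness of $\langle f\rangle\langle g\rangle=\langle fg\rangle$ already observed above, while associativity, commutativity, and the identity $\langle 1\rangle$ are inherited from multiplication of polynomials. The crucial algebraic fact underlying everything is that the reciprocal is multiplicative: for monic $f,g$ one has $\widetilde{fg}=\tilde f\,\tilde g$, because $\widetilde{fg}(x)=x^{\deg f+\deg g}f(1/x)g(1/x)=\tilde f(x)\tilde g(x)$, and moreover $\tilde f(0)=1$ for every monic $f$.

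For Type~II, I would define $\Phi\colon\Ecal\to\big(\fq[x]/(x^{\ell+1})\big)^{*}$ by $\Phi(\langle f\rangle)=\tilde f \bmod x^{\ell+1}$. Its image lies in the subgroup $U_\ell=\{u:u\equiv 1\bmod x\}$ of principal units, which has order $q^{\ell}$. The defining relation $\tilde f\equiv\tilde g\pmod{x^{\ell+1}}$ shows that $\Phi$ is simultaneously well defined and injective; surjectivity onto $U_\ell$ follows by sending $u=1+a_1x+\cdots+a_\ell x^{\ell}$ to the class of $x^{\ell}+a_1x^{\ell-1}+\cdots+a_\ell$; and multiplicativity is precisely $\widetilde{fg}=\tilde f\tilde g$ reduced modulo $x^{\ell+1}$. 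Transporting the group structure of $U_\ell$ through this bijection supplies inverses in $\Ecal$, so $\Ecal$ is an abelian group with $|\Ecal|=q^{\ell}$. The counting is then immediate: a polynomial in $\Mcal_d$ is determined by $f_1,\dots,f_d$, its class fixes $f_1,\dots,f_\ell$, and for $d\ge\ell$ the remaining $d-\ell$ coefficients are free, giving $q^{d-\ell}$ polynomials per class.

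For Type~I I would run the same argument with the combined map $\Phi=(\Phi_1,\Phi_2)$, where $\Phi_1(\langle f\rangle)=\tilde f\bmod x^{\ell+1}$ as above and $\Phi_2(\langle f\rangle)=f\bmod x^{t}$, landing in $U_\ell\times\big(\fq[x]/(x^{t})\big)^{*}$. Here $\Phi_2$ is a homomorphism because reduction modulo $x^{t}$ is a ring homomorphism, and $f\bmod x^{t}$ is a unit exactly because $f(0)\ne0$. The point making the pair map a bijection is the independence of the leading and ending data: for a monic polynomial of degree $\ell+t$ the coefficient positions $1,\dots,\ell$ and $\ell+1,\dots,\ell+t$ are disjoint, so $\tilde f\bmod x^{\ell+1}$ and $f\bmod x^{t}$ may be prescribed freely, subject only to $f(0)\ne0$. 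This gives $|\Ecal|=|U_\ell|\cdot\big|(\fq[x]/(x^{t}))^{*}\big|=q^{\ell}(q-1)q^{t-1}=(q-1)q^{\ell+t-1}$ and makes $\Ecal$ an abelian group. For the counting, when $d\ge\ell+t$ the index sets $\{1,\dots,\ell\}$ and $\{d-t+1,\dots,d\}$ of the prescribed leading and ending coefficients are disjoint, so exactly $\ell+t$ coefficients of $f\in\Mcal_d$ are fixed by $\eps$ while the remaining $d-\ell-t$ are free, yielding $q^{d-\ell-t}$ polynomials.

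I expect the main obstacle to be the bookkeeping that establishes independence of the leading and ending data, i.e. verifying that $\Phi$ is genuinely a bijection and not merely injective; this is where the hypothesis $f(0)\ne0$ (ensuring $f\bmod x^{t}$ is a unit and never interferes with the constant term $1$ forced by $\tilde f$) and the degree bound $d\ge\ell+t$ (ensuring the two coefficient windows do not overlap) are both essential. Once $\Phi$ is a multiplicative bijection onto a known finite abelian group, the group axioms for $\Ecal$—in particular the existence of inverses—transfer formally, so no separate construction of inverses is required.
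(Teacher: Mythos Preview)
Your proposal is correct and takes a genuinely different route from the paper's own proof. The paper proceeds by direct construction of inverses: fixing a degree-$(\ell+t)$ representative $f$, it writes down the coefficient equations that $\langle fg\rangle=\langle 1\rangle$ imposes and solves them recursively, first for $g_1,\dots,g_\ell$ from the leading side and then for $g_{\ell+t-1},\dots,g_{\ell+1}$ from the ending side (using that $f_{\ell+t}\ne 0$ to divide). Your approach instead identifies $\Ecal$ structurally, via the reciprocal map, with $U_\ell$ (Type~II) or $U_\ell\times(\fq[x]/(x^t))^{*}$ (Type~I), and then transports the group law. The paper's argument is more elementary and self-contained---it never names a quotient ring---while yours is more conceptual and in fact yields extra information for free: the explicit isomorphism with a product of local unit groups immediately explains the decomposition $\Ecal\cong C_{r_1}\times\cdots\times C_{r_f}$ into cyclic $p$-groups that the paper invokes later without proof, and it makes transparent why the two ``windows'' of coefficients behave independently. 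Your identification of the potential sticking point (disjointness of the two coefficient windows, requiring $d\ge\ell+t$ and $f(0)\ne 0$) is exactly right and matches where the paper's recursion splits into two non-interacting halves.
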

\proof
The multiplication is well defined because the product is independent of the choices of representatives in the equivalence classes.
For type~I, we only need to show that for each monic polynomial $f$ of degree $\ell+t$, there exists a unique monic polynomial $g$ of degree $\ell+t$ such that $\langle fg\rangle=\langle 1\rangle$. Writing
\begin{align*}
f& = \sum_{j=0}^{\ell+t}f_jx^{\ell+t-j},\\
g&=\sum_{j=0}^{\ell+t}g_jx^{\ell+t-j},
\end{align*}
where $f_0=g_0=1$ and $f_{\ell+t}g_{\ell+t}\ne 0$. We have
\begin{align}\label{eq:multiply}
fg=x^{2\ell+2t}+\sum_{d=1}^{2\ell+2t-1}\sum_{j=0}^df_jg_{d-j}x^{2\ell+2t-d} +f_{\ell+t}g_{\ell+t}.
\end{align}
 Thus
$\langle fg\rangle=\langle 1\rangle$ iff
\begin{align}
\sum_{j=0}^df_jg_{d-j}&=0,~~1\le d\le \ell, \label{eq:start}\\
\sum_{j=0}^{d}f_{\ell+t-j}g_{\ell+t-d+j}&=0,~~1\le d\le t-1, \label{eq:end}\\
g_{\ell+t}&=1/f_{\ell+t}.
\end{align}
The above system uniquely determines the values of $g_1,\ldots,g_{\ell}$ by the recursion
\begin{align}
g_d&=-\sum_{j=1}^d f_jg_{d-j},~~1\le d\le \ell,\\
g_{\ell+t-d}&=\frac{-1}{f_{\ell+t}}\sum_{j=1}^{d}f_{\ell+t-j}g_{\ell+t-d+j},~~1\le d\le t-1.
\end{align}
This completes the proof for type~I.\\

For type~II, we note that $\langle x^k\rangle=\langle x^{\ell}\rangle=\langle 1\rangle$ for all $k\ge 0$. Thus only equation~\eqref{eq:start} is used for obtaining the unique inverse.
\qed

We shall use $0$ to denote the zero element of the group algebra $\mathbb{C}\Ecal$ generated by the group $\Ecal$ over the complex field $\mathbb{C}$. For the type~I equivalence, it is convenient to define $\langle f\rangle:= 0$ if $f(0)=0$.


Define the following generating function
\begin{align}
F(z)=\sum_{f\in \Mcal} \langle f\rangle z^{\deg(f)}.
\end{align}
 We note that $F(z)$ is a formal power series with coefficients in the group algebra $\mathbb{C}\Ecal$.

Let $\Ical_d$ be the set of  irreducible polynomials in $\Mcal_d$ and for each $\eps \in \Ecal$ we  define
\[
I_d(\eps)=\#\{f\in \Ical_d: \langle f\rangle=\eps\}.
\]
Since a monic polynomial is uniquely  factored into a multiset of monic irreducible polynomials. The standard counting argument (generating function argument) (see \cite{FlaSed09}, for example) leads to
\begin{align*}
F(z)=\prod_{d\ge 1}\prod_{f\in \Ical_d}\left(\langle 1\rangle-z^d\langle f\rangle\right)^{-1}
=\prod_{d\ge 1}\prod_{\eps\in \Ecal}\left(\langle 1\rangle-z^d\eps\right)^{-I_d(\eps)}.
\end{align*}
Consequently
\begin{align}
\ln F(z) &=\sum_{d\ge 1}\sum_{f\in \Ical_d}\sum_{k\ge 1}\frac{1}{k}z^{kd}\langle f\rangle^k \label{eq:Irre}  \\
&=\sum_{d\ge 1}\sum_{\eps\in \Ecal}I_d(\eps)\ln\left(\langle 1\rangle- z^d\eps\right)^{-1}\nonumber\\
&=\sum_{d\ge 1}\sum_{k\ge 1}\sum_{\eps\in \Ecal}\frac{1}{k}I_d(\eps)z^{kd}\eps^k\nonumber\\
&=\sum_{m\ge 1}\sum_{k|m}\sum_{\eps\in \Ecal}\frac{1}{k}I_{m/k}(\eps)z^m\eps^k. \label{eq:Ifactor}
\end{align}

For each $\eps\in \Ecal$, define
\begin{align}\label{eq:N1}
N_d(\eps)&=d\left[z^d\eps\right]\ln F(z).
\end{align}

\begin{prop} \label{prop:IN}
\begin{align}
I_d(\eps)=\frac{1}{d}\sum_{k|d}\mu(k)\sum_{\eps_1\in \Ecal}
N_{d/k}(\eps_1)\left\llbracket \eps_1^k=\eps \right\rrbracket.
\end{align}
\end{prop}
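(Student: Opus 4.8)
The plan is to extract from \eqref{eq:Ifactor} an explicit formula expressing each $N_m(\eta)$ as a twisted divisor sum of the numbers $I_j(\eps)$, and then to invert this relation by a M\"obius argument carried out in the group algebra $\C\Ecal$. First I would read off the $\C\Ecal$-coefficient of $z^m$ in \eqref{eq:Ifactor}: since $N_m(\eta)=m\left[z^m\eta\right]\ln F(z)$, collecting all terms with $kd=m$ and then isolating those group elements $\eps$ with $\eps^k=\eta$ gives
\[
N_m(\eta)=\sum_{k\mid m}\frac{m}{k}\sum_{\substack{\eps\in\Ecal\\ \eps^k=\eta}}I_{m/k}(\eps)
=\sum_{j\mid m} j\sum_{\substack{\eps\in\Ecal\\ \eps^{m/j}=\eta}}I_{j}(\eps),
\]
where the second form uses the substitution $j=m/k$.

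To invert this, I would work directly in $\C\Ecal$. For each degree $d$ set $J_d=\sum_{\eps\in\Ecal}I_d(\eps)\eps$ and $P_d=\sum_{\eta\in\Ecal}N_d(\eta)\eta$, and let $\psi_k\colon\C\Ecal\to\C\Ecal$ be the linear ``$k$-th power'' map determined by $\psi_k(\eps)=\eps^k$. The relation above then reads
\[
P_m=\sum_{j\mid m} j\,\psi_{m/j}(J_j).
\]
The crucial structural facts are that $\psi_k\circ\psi_a=\psi_{ka}$, because $(\eps^a)^k=\eps^{ka}$, and that $\psi_1=\mathrm{id}$; thus the family $\{\psi_k\}$ realizes the multiplicative monoid of positive integers, which is exactly the setting in which the classical M\"obius inversion on the divisibility poset applies, now twisted by these operators.

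I would then verify that
\[
J_d=\frac{1}{d}\sum_{k\mid d}\mu(k)\,\psi_k\!\left(P_{d/k}\right)
\]
inverts the relation. Substituting $P_{d/k}=\sum_{ab=d/k}b\,\psi_a(J_b)$ and using $\psi_k\psi_a=\psi_{ka}$, the double sum regroups, for each fixed $b\mid d$, into $b\,\psi_{d/b}(J_b)\sum_{k\mid (d/b)}\mu(k)$; the inner sum equals $\llbracket d/b=1\rrbracket$, so only $b=d$ survives and the right-hand side collapses to $d\,J_d$. Finally, since $\psi_k(P_{d/k})=\sum_{\eps_1}N_{d/k}(\eps_1)\eps_1^k$, comparing the coefficient of the basis element $\eps$ on both sides of the displayed identity for $J_d$ yields exactly
\[
I_d(\eps)=\frac{1}{d}\sum_{k\mid d}\mu(k)\sum_{\eps_1\in\Ecal}N_{d/k}(\eps_1)\left\llbracket\eps_1^k=\eps\right\rrbracket.
\]

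I expect the main point requiring care to be the bookkeeping in this twisted M\"obius inversion: one must track the multiplicative weight $j$ simultaneously with the composition law of the power maps, and confirm that regrouping by the index $b$, together with the standard identity $\sum_{k\mid c}\mu(k)=\llbracket c=1\rrbracket$, genuinely telescopes to a single surviving term. All the sums involved are finite and we operate on formal power series, so the coefficient extractions are unproblematic. An equivalent, more pedestrian route avoids the group algebra entirely, substituting the defining relation for $N_{d/k}(\eps_1)$ directly into the right-hand side and performing the same divisor regrouping componentwise; I expect the operator formulation to be cleaner, but the two are interchangeable.
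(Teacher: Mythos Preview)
Your proposal is correct and essentially the same as the paper's proof: both extract from \eqref{eq:Ifactor} the relation $N_m(\eps')=m\sum_{k\mid m}\frac{1}{k}\sum_{\eps}I_{m/k}(\eps)\llbracket\eps^k=\eps'\rrbracket$ and invert it via the M\"obius identity $\sum_{k\mid t}\mu(k)=\llbracket t=1\rrbracket$. The paper performs exactly the direct componentwise substitution you describe at the end as the ``pedestrian route,'' so your group-algebra packaging with the power maps $\psi_k$ is a cosmetic reformulation of the same computation.
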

\proof Extracting the coefficient of $z^m\eps$ on both sides of (\ref{eq:Ifactor}), we obtain
\begin{align} \label{NofI}
N_m(\eps')&=m\sum_{k|m}\sum_{\eps\in \Ecal}\frac{1}{k}I_{m/k}(\eps)\left\llbracket \eps^k=\eps' \right\rrbracket.
\end{align}
Thus
\begin{align*}
& \frac{1}{d}\sum_{k|d}\mu(k)\sum_{\eps_1}
N_{d/k}(\eps_1)\left\llbracket \eps_1^k=\eps \right\rrbracket\\
&=\frac{1}{d}\sum_{k|d}\mu(k)\sum_{\eps_1}\frac{d}{k}\sum_{j|(d/k)}\sum_{\eps_2}\frac{1}{j}I_{d/kj}(\eps_2)\left\llbracket \eps_2^j=\eps_1 \right\rrbracket\left\llbracket \eps_1^k=\eps \right\rrbracket\\
&=\sum_{t|d}\left(\sum_{k|t}\mu(k)\right)\frac{1}{t}\sum_{\eps_2}I_{d/t}(\eps_2)\left\llbracket \eps_2^t=\eps \right\rrbracket ~~\hbox{(set $t=jk$)}\\
&=\sum_{t|d}\llbracket t=1\rrbracket \frac{1}{t}\sum_{\eps_2}I_{d/t}(\eps_2)\left\llbracket \eps_2^t=\eps \right\rrbracket\\
&=I_d(\eps). \qed
\end{align*}


The following result from \cite[Proposition~3.1]{EM08} will be useful.  More information on primitive idempotent decomposition can be found in \cite{Jespers}.

\begin{prop}\label{prop:cyclic}
Let $\xi$ be a generator of the cyclic group $C_r$, and $\om_r=\exp(2\pi i/r)$. For $1\le s\le r$, define
\begin{align}\label{eq:JofXi}
J_{r,s}=\frac{1}{r}\sum_{j=1}^r\om_r^{-sj}\xi^j.
\end{align}
Then $\{J_{r,1},\ldots,J_{r,r}\}$ form an orthogonal basis of $\mathbb{C}C_r$, and $J_{r,s}^2=J_{r,s}$ for each $1\le s\le r$. We also have
\begin{align}\label{eq:XiofJ}
\xi^s=\sum_{j=1}^r\om_r^{js}J_{r,j},  ~1 \leq s \leq r.
\end{align}
\end{prop}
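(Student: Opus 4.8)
The plan is to treat everything as a discrete Fourier computation inside the $r$-dimensional algebra $\mathbb{C}C_r$, whose standard basis is $\{1,\xi,\ldots,\xi^{r-1}\}$ (equivalently $\{\xi,\xi^2,\ldots,\xi^r\}$, since $\xi^r=1$). The single tool that does all the work is the character-orthogonality relation
\[
\sum_{j=1}^r \om_r^{aj} = r\,\llbracket r\mid a\rrbracket,
\]
which is just the geometric-series evaluation for the primitive $r$-th root of unity $\om_r$. All three assertions --- idempotency, (multiplicative) orthogonality $J_{r,s}J_{r,s'}=0$ for $s\ne s'$, and the inversion formula --- are bilinear identities among the $J_{r,s}$ and the $\xi^s$, so each reduces to a single application of this relation.

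First I would establish idempotency and orthogonality simultaneously by expanding the product directly,
\[
J_{r,s}J_{r,s'} = \frac{1}{r^2}\sum_{j=1}^r\sum_{k=1}^r \om_r^{-sj-s'k}\,\xi^{j+k}.
\]
Reindexing the convolution by setting $m\equiv j+k \pmod r$ (so that $\xi^{j+k}=\xi^m$ and $k\equiv m-j$) and summing over $j$ first, the inner sum becomes $\sum_{j=1}^r \om_r^{-(s-s')j}$, which by character orthogonality equals $r\,\llbracket r\mid(s-s')\rrbracket$. Hence
\[
J_{r,s}J_{r,s'} = \llbracket r\mid(s-s')\rrbracket\, \frac{1}{r}\sum_{m=1}^r \om_r^{-sm}\xi^{m} = \llbracket s=s'\rrbracket\, J_{r,s},
\]
where the last equality uses that on the range $1\le s,s'\le r$ the congruence $s\equiv s'\pmod r$ collapses to $s=s'$. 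This yields $J_{r,s}^2=J_{r,s}$ and $J_{r,s}J_{r,s'}=0$ for $s\ne s'$ in one stroke.

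Finally I would verify the inversion formula and deduce the basis claim. The same orthogonality sum gives
\[
\sum_{j=1}^r \om_r^{js}J_{r,j} = \frac{1}{r}\sum_{k=1}^r \xi^k \sum_{j=1}^r \om_r^{j(s-k)} = \frac{1}{r}\cdot r\,\xi^s = \xi^s,
\]
so every $\xi^s$, and therefore all of $\mathbb{C}C_r$, lies in the span of $\{J_{r,1},\ldots,J_{r,r}\}$; as these are $r$ elements spanning an $r$-dimensional space, they form a basis (alternatively, nonzero pairwise-orthogonal idempotents are automatically linearly independent). The only point demanding care --- and the closest thing to an obstacle in an otherwise routine argument --- is the cyclic bookkeeping: keeping the convolution index and every exponent consistent modulo $r$, and recording that on the range $1\le s,s'\le r$ divisibility by $r$ reduces to equality, so that the Iverson brackets displayed above are the honest ones.
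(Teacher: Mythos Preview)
Your argument is correct and complete. It is close in spirit to the paper's proof --- both rest on the single orthogonality relation $\sum_{j=1}^r\om_r^{aj}=r\llbracket r\mid a\rrbracket$ --- but the packaging differs. The paper encodes the defining relation \eqref{eq:JofXi} as a matrix equation $[J_{r,1},\ldots,J_{r,r}]=[\xi,\ldots,\xi^r]M_r$ with $M_r(s,j)=\frac{1}{r}\om_r^{-sj}$, checks $M_rM_r^*=\frac{1}{r}I_r$, and inverts to obtain \eqref{eq:XiofJ}; it does not write out the idempotency or the multiplicative orthogonality, relying instead on the cited reference for those. You instead compute $J_{r,s}J_{r,s'}$ directly in the group algebra and obtain $\llbracket s=s'\rrbracket J_{r,s}$, which establishes idempotency and orthogonality in one line, and then verify the inversion formula by the same device. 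Your route is marginally more self-contained; the paper's matrix formulation is terser and makes the linear-algebraic invertibility (hence the basis claim) immediate without a separate spanning argument.
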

\proof Define the $r\times r$ symmetric matrix $M_r$ whose $(s,j)$th entry is given by $M_r(s,j)=\frac{1}{r}\om_r^{-sj}$. It is easy to check that
$ M_rM_r^*=\frac{1}{r}I_r $, where $I_r$ denotes the identity matrix of order $r$.
Thus \eqref{eq:JofXi} is equivalent to
\begin{align}
[J_{r,1},J_{r,2},\ldots, J_{r,r}]&=[\xi,\xi^2,\ldots,\xi^r]M_r.
\end{align}
Multiplying by $rM_r^{*}$ on both sides, we obtain
\[
[\xi,\xi^2,\ldots,\xi^r]=[J_{r,1},J_{r,2},\ldots, J_{r,r}](rM_r^*),
\]
and \eqref{eq:XiofJ} follows.
\qed

Define
\begin{align}\label{eq:E}
E=\frac{1}{|\Ecal|}\sum_{\eps\in \Ecal}\eps.
\end{align}
It is easy to verify
\begin{align}
E\eps&=E, \hbox{ for each }\eps\in \Ecal, \\
E^2&=E.
\end{align}

It is well known that a finite abelian group is isomorphic to the direct product of cyclic groups. Thus we may write
\[
\Ecal\cong C_{r_1}\times C_{r_2}\times \cdots \times C_{r_f},
\]
where $C_{r_j}$ is the cyclic group of order $r_j$ and $r_1r_2\cdots r_f=|\Ecal|$.   We let  $\xi_i$ be the generator  of $C_{r_j}$  for  $1\leq i \leq f$, and  denote the $r_i$-th root of unity by
$$\om_{r_i}=\exp(2\pi i/r_i).$$

 Let $[n]$ denote the set $\{1,2,\ldots, n\}$. For convenience, we denote
\[
\Rcal:= [r_1]\times [r_2] \times \cdots \times [r_f],
\]
and
\[
\Jcal:=\{\vec{j}\in [r_1]\times [r_2] \times \cdots \times [r_f]:\vec{j}\ne \vec{r}\},
\]
where $\vec{j} = (j_1, j_2, \ldots, j_f)$ and  $\vec{r} = (r_1, r_2, \ldots, r_f)$.

Denote
 \begin{align}
 B_{\vec{s}}:=J_{r_1,s_1}\times J_{r_2,s_2} \times \cdots \times J_{r_k,s_k}. \label{eq:Bj}, \end{align}
for each $\vec{s} = (s_1, s_2, \ldots, s_f) \in [r_1]\times [r_2] \times \cdots \times [r_f]$.  It follows from Proposition~\ref{prop:cyclic} that the set
$
\{B_{\vec{s}}:\vec{s}\in [r_1]\times [r_2] \times \cdots \times [r_f]\}
$
forms an orthogonal basis of $\mathbb{C}\Ecal$ and $B_{\vec{s}}^2=B_{\vec{s}}$ for all $\vec{s}\in [r_1]\times [r_2] \times \cdots \times [r_f]$. In particular,
\[
B_{\vec{r}}=E.
\]

Next we consider some subsets of the group $\Ecal$ refined by the parameter $d$.
For type~I,  let
$$\Ecal_d =  \{ \langle f \rangle  :  f \in \Mcal_d,  f(0) \neq 0 \}$$ with $1 \leq d \leq \ell+t-1$.  For type~II,  we let
$$\Ecal_d =  \{ \langle f \rangle  :  f \in \Mcal_d\}$$ with $1 \leq d \leq \ell-1$.  We note that  $\Ecal_d = q^{d -\ell -t} \Ecal$ for $d \geq \ell +t$.

For both types,    we define
\begin{align}
c_{d,\vec{j}} &= \sum_{\eps \in \Ecal_d }\left\llbracket \eps= \prod_{i=1}^f  \xi_i^{v_i}\right\rrbracket \prod_{i=1}^f  \om_{r_i}^{v_i  j_i}.\label{eq:cdB}
\end{align}

The following result expresses $\sum_{\eps\in \Ecal_d}\eps$ in terms of the above orthogonal basis.
\begin{prop} \label{prop:basis}  Let $c_{d,\vec{j}}$ be defined in \eqref{eq:cdB}. Then we have
\begin{align}
\sum_{\eps \in \Ecal_d} \eps &=(q-1)q^{d-1}E+\sum_{\vec{j}\in \Jcal}c_{d,\vec{j}}B_{\vec{j}} ~\hbox{ for type~I and $1\le d\le \ell+t-1$}, \label{I:AofB}\\
\sum_{\eps \in \Ecal_d} \eps &=q^{d}E+\sum_{\vec{j}\in \Jcal}c_{d,\vec{j}}B_{\vec{j}} ~\hbox{ for type~II and $1\le d\le \ell-1$}, \label{II:AofB}
\end{align}
\end{prop}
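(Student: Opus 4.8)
The plan is to expand the element $\sum_{\eps\in\Ecal_d}\eps$ of $\mathbb{C}\Ecal$ directly in the orthogonal idempotent basis $\{B_{\vec j}:\vec j\in\Rcal\}$ and read off the coefficients, isolating the contribution of $B_{\vec r}=E$. The only genuine input beyond bookkeeping is the product version of the inversion formula \eqref{eq:XiofJ} from Proposition~\ref{prop:cyclic}, together with a count of $\Ecal_d$.

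First I would fix $\eps\in\Ecal$ and write it, under the isomorphism $\Ecal\cong C_{r_1}\times\cdots\times C_{r_f}$, as $\eps=\prod_{i=1}^f\xi_i^{v_i}$, where the exponents $v_i$ are determined modulo $r_i$. Applying \eqref{eq:XiofJ} in each cyclic factor and using that the group algebra of a direct product is the tensor product of the factor algebras (so that $B_{\vec j}=J_{r_1,j_1}\times\cdots\times J_{r_f,j_f}$ are the products of the factor idempotents), I obtain
\begin{align*}
\eps=\prod_{i=1}^f\left(\sum_{j_i=1}^{r_i}\om_{r_i}^{v_ij_i}J_{r_i,j_i}\right)=\sum_{\vec j\in\Rcal}\left(\prod_{i=1}^f\om_{r_i}^{v_ij_i}\right)B_{\vec j}.
\end{align*}
Summing this identity over $\eps\in\Ecal_d$ and interchanging the two summations gives
\begin{align*}
\sum_{\eps\in\Ecal_d}\eps=\sum_{\vec j\in\Rcal}\left(\sum_{\eps\in\Ecal_d}\prod_{i=1}^f\om_{r_i}^{v_ij_i}\right)B_{\vec j}=\sum_{\vec j\in\Rcal}c_{d,\vec j}\,B_{\vec j},
\end{align*}
where the inner coefficient is exactly $c_{d,\vec j}$ by \eqref{eq:cdB}; there the Iverson bracket simply selects, for each $\eps$, its coordinate vector $(v_1,\dots,v_f)$.

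Next I would split off the term $\vec j=\vec r$. Since $\om_{r_i}^{r_i}=1$, we have $\prod_{i=1}^f\om_{r_i}^{v_ir_i}=1$ for every $\eps$, so $c_{d,\vec r}=|\Ecal_d|$; combined with $B_{\vec r}=E$, this isolates the $E$-component, and the remaining sum runs over $\vec j\in\Jcal$, already matching the stated formulas. It therefore remains only to evaluate $|\Ecal_d|$ in each case.

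The main point to check carefully is precisely this count, which is where the degree restrictions are used. For type~I with $1\le d\le\ell+t-1$, I would argue that the map $f\mapsto\langle f\rangle$ is injective on the monic degree-$d$ polynomials with $f(0)\ne0$: the $\ell$ leading and $t$ ending coefficients occupy the index sets $\{1,\dots,\ell\}$ and $\{d-t+1,\dots,d\}$, whose union covers all of $\{1,\dots,d\}$ precisely because $d\le\ell+t-1$ forces $d-t+1\le\ell$, so the class determines every coefficient of such an $f$. Hence $|\Ecal_d|$ equals the number of monic degree-$d$ polynomials with nonzero constant term, namely $(q-1)q^{d-1}$, giving \eqref{I:AofB}. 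For type~II with $1\le d\le\ell-1$ the same reasoning applies, the $\ell$ leading coefficients now covering $\{1,\dots,d\}$ since $d<\ell$, so $|\Ecal_d|=q^{d}$ and \eqref{II:AofB} follows. I expect this counting/injectivity step to be the only real obstacle; the algebraic expansion above is a routine consequence of Proposition~\ref{prop:cyclic}.
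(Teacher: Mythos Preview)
Your argument is correct and follows essentially the same route as the paper: both proofs expand each $\eps=\prod_i\xi_i^{v_i}$ via \eqref{eq:XiofJ} to pass to the idempotent basis $\{B_{\vec j}\}$ and then identify the $B_{\vec j}$-coefficient with $c_{d,\vec j}$, isolating the $E=B_{\vec r}$ term as $|\Ecal_d|$. The only cosmetic difference is that the paper first posits an expansion and extracts coefficients using orthogonality and idempotence, whereas you compute the expansion directly; your explicit injectivity argument for $|\Ecal_d|$ is a small addition over the paper, which simply asserts the values $(q-1)q^{d-1}$ and $q^d$.
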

\proof Since $\{E\}\cup \{B_{\vec{j}} : \vec{j}\in \Jcal\}$ is a basis, we may write
\begin{align}\label{eq:LinExp}
\sum_{\eps \in \Ecal_d} \eps&=aE+\sum_{\vec{j}\in \Jcal}a_{\vec{j}}B_{\vec{j}}
\end{align}
for some complex numbers $a$ and $a_{\vec{j}}$.

Since the basis is orthogonal and the basis elements are idempotent, we have
\begin{align}
a&=[E]\left(\sum_{\eps \in \Ecal_d} \eps\right) =|\Ecal_d|,
\end{align}
which is equal to $(q-1)q^{d-1}$ for type~I, and $q^{d}$ for type~II.

Similarly, we have
\begin{align}
a_{\vec{j}}&=[B_{\vec{j}}]\left(\sum_{\eps \in \Ecal_d} \eps \right)\nonumber\\ &=[B_{\vec{j}}]\left(\sum_{\eps \in \Ecal_d }\left\llbracket \eps= \prod_{i=1}^f  \xi_i^{v_i}\right\rrbracket \prod_{i=1}^f   \xi_i^{v_i} \right)\nonumber\\
&=[B_{\vec{j}}]\left(\sum_{\eps \in \Ecal_d }\left\llbracket \eps= \prod_{i=1}^f  \xi_i^{v_i}\right\rrbracket \prod_{i=1}^f \sum_{u_i=1}^{r_i}\om_{r_i}^{u_iv_i}J_{r_i,u_i} \right)\nonumber\\
&=\sum_{\eps \in \Ecal_d }\left\llbracket \eps= \prod_{i=1}^f  \xi_i^{v_i}\right\rrbracket \prod_{i=1}^f \om_{r_i}^{j_iv_i},\nonumber
\end{align}
which is $c_{d,\vec{j}}$ defined in \eqref{eq:cdB}.
\qed

Now we are ready to prove our main result.

\begin{thm} \label{thm:main} Let $c_{d,\vec{j}}$ be defined in \eqref{eq:cdB}, let $\tau=\ell+t-1$ for type~I and $\tau=\ell-1$ for type~II. Define
\begin{align}
P_{\vec{j}}(z)&=1+\sum_{k=1}^{\tau}c_{k,\vec{j}}z^k. \label{eq:PB}
\end{align}
With $\eps=\xi_1^{t_1}\cdots \xi_f^{t_f}$, the following hold.
\begin{itemize}
 \item[(I)] For type~I, we have
\begin{align}
\ln F&=E\ln \frac{1-z}{1-qz}+\sum_{\vec{j}\in \Jcal}B_{\vec{j}}\,\ln P_{\vec{j}}(z), \label{eq:LN1}\\
[\eps]\ln F&=\frac{1}{q-1}q^{1-\ell-t}\left(
\ln \frac{1-z}{1-qz}+\sum_{\vec{j}\in \Jcal}\prod_{i=1}^f\omega_{r_i}^{-j_it_i}\ln P_{\vec{j}}(z)\right).
\label{eq:N1}
\end{align}
 \item[(II)] For type~II,  we have
\begin{align}
\ln F&=E\ln \frac{1}{1-qz}+\sum_{\vec{j}\in \Jcal}B_{\vec{j}}\,\ln P_{\vec{j}}(z), \label{eq:LN2}\\
[\eps]\ln F&=q^{-\ell}\left(
\ln \frac{1}{1-qz}+\sum_{\vec{j}\in \Jcal}\prod_{i=1}^f\omega_{r_i}^{-j_it_i}\ln P_{\vec{j}}\right).
\label{eq:N2}
\end{align}
\end{itemize}
\end{thm}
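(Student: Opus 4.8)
The plan is to expand the generating function $F(z)$ directly in the orthogonal idempotent basis $\{E\}\cup\{B_{\vec j}:\vec j\in\Jcal\}$ and then exploit the fact that a logarithm acts coordinatewise on such a basis. First I would compute the coefficient $[z^d]F=\sum_{f\in\Mcal_d}\langle f\rangle$ for every $d$, splitting into two ranges. In the small range ($1\le d\le\tau$) the map $f\mapsto\langle f\rangle$ is injective on degree-$d$ monic polynomials (for type~I one checks that, once $d\le\ell+t-1$, the prescribed $\ell$ leading and $t$ ending coefficients together determine all of $f_0,\dots,f_d$; type~II is similar), so $[z^d]F=\sum_{\eps\in\Ecal_d}\eps$ and Proposition~\ref{prop:basis} applies verbatim. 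In the large range ($d\ge\ell+t$ in type~I, $d\ge\ell$ in type~II) Proposition~\ref{group} says each class is represented equally often, so $[z^d]F$ is a scalar multiple of $\sum_{\eps\in\Ecal}\eps=|\Ecal|E$ and contributes only to the $E$-component.

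Next I would collect the two components. The coefficient of each $B_{\vec j}$ with $\vec j\in\Jcal$ receives $1$ from $[z^0]F=\langle 1\rangle$ and $c_{k,\vec j}$ from the terms $1\le k\le\tau$, and nothing from the large range; by \eqref{eq:PB} this is exactly $P_{\vec j}(z)$. The coefficient of $E$ receives $(q-1)q^{d-1}$ (type~I) or $q^{d}$ (type~II) uniformly across both ranges, which telescopes into a single geometric series: $[E]F=1+(q-1)\sum_{d\ge1}q^{d-1}z^d=\frac{1-z}{1-qz}$ for type~I, and $\sum_{d\ge0}q^dz^d=\frac{1}{1-qz}$ for type~II. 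Thus $F=\frac{1-z}{1-qz}\,E+\sum_{\vec j\in\Jcal}P_{\vec j}(z)\,B_{\vec j}$ (respectively with $\frac{1}{1-qz}$). The clean matching at the boundary—that the uniform large-$d$ contribution has the same scalar shape as the $E$-coefficient $|\Ecal_d|$ produced by Proposition~\ref{prop:basis} in the small-$d$ range—is what merges the two ranges into one untruncated geometric series.

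With $F$ in this diagonal form, the crucial step is that $E$ and the $B_{\vec j}$ are pairwise orthogonal idempotents summing to the identity $\langle 1\rangle$, so $F^k$ has scalar $E$-coefficient equal to the $k$-th power of that of $F$ and scalar $B_{\vec j}$-coefficient $P_{\vec j}(z)^k$. Consequently any formal power series applied to $F$ acts coordinatewise; in particular $\ln F=\ln(\langle 1\rangle+(F-\langle 1\rangle))$ is well defined because $[z^0]F=\langle 1\rangle$ and each scalar coefficient has constant term $1$, and it splits as claimed. This gives \eqref{eq:LN1} and \eqref{eq:LN2} at once. Finally I would read off $[\eps]\ln F$ by re-expanding $E$ and $B_{\vec j}$ in the group-element basis: from $J_{r_i,j_i}=\frac{1}{r_i}\sum_v\om_{r_i}^{-j_iv}\xi_i^v$ one obtains $[\eps]B_{\vec j}=\frac{1}{|\Ecal|}\prod_{i=1}^f\om_{r_i}^{-j_it_i}$ and $[\eps]E=\frac{1}{|\Ecal|}$ for $\eps=\xi_1^{t_1}\cdots\xi_f^{t_f}$, and substituting $|\Ecal|=(q-1)q^{\ell+t-1}$ or $q^{\ell}$ yields \eqref{eq:N1} and \eqref{eq:N2}.

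I expect the main obstacle to be the careful justification of the coordinatewise logarithm, together with the injectivity and counting bookkeeping in the small-degree range that lets me identify $[z^d]F$ with $\sum_{\eps\in\Ecal_d}\eps$. Once the decomposition $F=\frac{1-z}{1-qz}E+\sum_{\vec j\in\Jcal}P_{\vec j}(z)B_{\vec j}$ is in hand, the remainder is a mechanical change of basis from $\{E,B_{\vec j}\}$ back to the group elements.
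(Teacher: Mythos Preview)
Your proposal is correct and follows essentially the same route as the paper: compute $F$ (the paper works with $F-\langle 1\rangle$) in the orthogonal idempotent basis $\{E\}\cup\{B_{\vec j}\}$ via Proposition~\ref{prop:basis} and Proposition~\ref{group}, use orthogonality so that $\ln$ acts coordinatewise, and then change back to the group-element basis to extract $[\eps]\ln F$. If anything, your writeup is slightly more explicit than the paper's about the injectivity of $f\mapsto\langle f\rangle$ in the small-degree range and about why the logarithm splits over the idempotents.
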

\proof For type~I,  by  Proposition~\ref{prop:basis},   \eqref{eq:E} and definition of $\Ecal_d$, we have

\begin{align*}
\sum_{d\ge 1}z^d\sum_{\eps\in \Ecal_d}\eps &=\sum_{d=1}^{\ell+t-1}\left((q-1)q^{d-1}E+\sum_{\vec{j}\in \Jcal}c_{d,\vec{j}}B_{\vec{j}}\right)z^d+\sum_{d\ge \ell+t}(q-1)q^{d-1}z^dE \\
&=E\sum_{d\ge 1}(q-1)q^{d-1}z^d+\sum_{\vec{j}\in \Jcal}B_{\vec{j}}\sum_{d=1}^{\ell+t-1}c_{d,\vec{j}}z^d \\
&=E\frac{(q-1)z}{1-qz}+\sum_{\vec{j}\in \Jcal}B_{\vec{j}}\sum_{d=1}^{\ell+t-1}c_{d,\vec{j}}z^d.
\end{align*}
Hence
\begin{align*}
\ln F&=\sum_{m\ge 1}\frac{(-1)^{m-1}}{m}\left(\sum_{d\ge 1}z^d\sum_{\eps\in \Ecal_d}\eps \right)^m  \\
&=\sum_{m\ge 1}\frac{(-1)^{m-1}}{m}\left(E\left(\frac{(q-1)z}{1-qz}\right)^m +
\sum_{\vec{j}\in \Jcal}B_{\vec{j}}\left(\sum_{d=1}^{\ell+t-1}c_{d,\vec{j}}z^d\right)^m\right)\\
&=E\ln\left(1+ \frac{(q-1)z}{1-qz}\right)+\sum_{\vec{j}\in \Jcal}B_{\vec{j}}\ln \left(1+\sum_{d=1}^{\ell+t-1}c_{d,\vec{j}}z^d\right),
\end{align*}
which gives \eqref{eq:LN1}. Using Proposition~\ref{prop:cyclic}
and extracting the coefficient of $\eps$ from \eqref{eq:LN1}, we obtain \eqref{eq:N1}.

Similarly for type~II,  we have

\begin{align*}
\sum_{d\ge 1}z^d\sum_{\eps\in \Ecal_d}\eps &=\sum_{d=1}^{\ell-1}\left(q^dE+\sum_{\vec{j}\in \Jcal}c_{d,\vec{j}}B_{\vec{j}}\right)z^d+\sum_{d\ge \ell}q^{d}z^dE \\
&=E\sum_{d\ge 1}q^{d}z^d+\sum_{\vec{j}\in \Jcal}B_{\vec{j}}\sum_{d=1}^{\ell-1}c_{d,\vec{j}}z^d\\
&=E\frac{qz}{1-qz}+\sum_{\vec{j}\in \Jcal}B_{\vec{j}}\sum_{d=1}^{\ell-1}c_{d,\vec{j}}z^d.
\end{align*}
Therefore
\begin{align*}
\ln F&=\sum_{m\ge 1}\frac{(-1)^{m-1}}{m}\left(\sum_{d\ge 1}z^d\sum_{\eps\in \Ecal_d}\eps \right)^m  \\
&=\sum_{m\ge 1}\frac{(-1)^{m-1}}{m}\left(E\left(\frac{qz}{1-qz}  \right)^m +
\sum_{\vec{j}\in \Jcal}B_{\vec{j}}\left(\sum_{d=1}^{\ell-1}c_{d,\vec{j}}z^d\right)^m\right)\\
&=E\ln\left(1+ \frac{qz}{1-qz}\right)+\sum_{\vec{j}\in \Jcal}B_{\vec{j}}\ln \left(1+\sum_{d=1}^{\ell-1}c_{d,\vec{j}}z^d\right),
\end{align*}
which gives \eqref{eq:LN2}. Equation~\eqref{eq:N2} follows similarly as for type~I. \qed

\medskip

For the rest of the paper, we shall also use $N_n(\vec{t})$ to denote $N_n(\eps)$ etc. when $\eps=\xi_1^{t_1}\cdots\xi_f^{t_f}$
and $\vec{t} = (t_1, t_2, \ldots, t_f)$. The following corollary is immediate.
\begin{cor}\label{cor:trivial} Let $P(z)=\prod_{\vec{j}\in \Jcal}P_{\vec{j}}(z)$, and $\Scal$ be the multiset of all complex roots of $P_{\vec{j}}(z)$. Then
\begin{itemize}
\item[Type~I:]
\begin{align}
N_n(\vec{0})&=\frac{1}{q-1}q^{1-\ell-t}\left(q^n-1\right)
+\frac{1}{q-1}q^{1-\ell-t}n[z^n]\ln P(z)   \\
&=\frac{1}{q-1}q^{1-\ell-t}\left(q^n-1\right)-\frac{1}{q-1}q^{1-\ell-t}
\sum_{\rho\in \Scal}\rho^{-n}.
\end{align}
\item[Type~II:]
\begin{align}
N_n(\vec{0})&=q^{n-\ell}
+q^{-\ell}n[z^n]\ln P(z)   \\
&=q^{n-\ell}-q^{-\ell}
\sum_{\rho\in \Scal}\rho^{-n}.
\end{align}
\end{itemize}
\end{cor}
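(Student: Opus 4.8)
The plan is to read Corollary~\ref{cor:trivial} directly off Theorem~\ref{thm:main} by specializing to the identity class $\eps=\langle 1\rangle$, i.e.\ to $\vec{t}=\vec{0}$. By the definition $N_d(\eps)=d\,[z^d\eps]\ln F(z)$, the quantity $N_n(\vec{0})$ is simply $n$ times the coefficient of $z^n$ in the power series $[\langle 1\rangle]\ln F$, so the whole statement reduces to extracting one power-series coefficient from the coefficient formulae \eqref{eq:N1} and \eqref{eq:N2} of Theorem~\ref{thm:main}.

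First I would set $\vec{t}=\vec{0}$ in those formulae. Each character weight $\prod_{i=1}^f\omega_{r_i}^{-j_it_i}$ then equals $1$, so the sum over $\Jcal$ collapses to $\sum_{\vec{j}\in\Jcal}\ln P_{\vec{j}}(z)$, which by additivity of $\ln$ over products is exactly $\ln\prod_{\vec{j}\in\Jcal}P_{\vec{j}}(z)=\ln P(z)$ for the $P$ of the statement. Thus $[\langle 1\rangle]\ln F$ equals $\tfrac{1}{q-1}q^{1-\ell-t}\big(\ln\tfrac{1-z}{1-qz}+\ln P(z)\big)$ in type~I, and $q^{-\ell}\big(\ln\tfrac{1}{1-qz}+\ln P(z)\big)$ in type~II.

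Next I would evaluate the two elementary logarithms termwise: since $\ln\tfrac{1-z}{1-qz}=\sum_{m\ge1}\tfrac{q^m-1}{m}z^m$ and $\ln\tfrac{1}{1-qz}=\sum_{m\ge1}\tfrac{q^m}{m}z^m$, multiplying the $z^n$-coefficient by $n$ gives $q^n-1$ and $q^n$ respectively. Substituting these produces the first displayed identity in each type. For the second (root) form I would use that every $P_{\vec{j}}$ has constant term $1$ by \eqref{eq:PB}, so $P(0)=1$ and hence $P(z)=\prod_{\rho\in\Scal}(1-z/\rho)$, where $\Scal$ is the finite multiset of roots of $P$ (equivalently, of all the $P_{\vec{j}}$). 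Then $\ln P(z)=\sum_{\rho\in\Scal}\ln(1-z/\rho)=-\sum_{m\ge1}\tfrac{z^m}{m}\sum_{\rho\in\Scal}\rho^{-m}$, so $n\,[z^n]\ln P(z)=-\sum_{\rho\in\Scal}\rho^{-n}$, which converts the first identity into the second.

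There is no genuine obstacle: the corollary is immediate once Theorem~\ref{thm:main} is in hand. The only items deserving care are the bookkeeping of the scalar prefactors ($\tfrac{1}{q-1}q^{1-\ell-t}$ in type~I versus $q^{-\ell}$ in type~II) and the justification of the factorization $P(z)=\prod_{\rho\in\Scal}(1-z/\rho)$: this needs $P(0)=1$ so that $0\notin\Scal$, and the finiteness of $\Scal$ (each $P_{\vec{j}}$ being a polynomial of degree at most $\tau$) so that interchanging $[z^n]$ with the sum over $\rho$ is harmless.
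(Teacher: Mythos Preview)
Your proposal is correct and is exactly the intended argument: the paper declares the corollary ``immediate'' from Theorem~\ref{thm:main}, and the root-form step you spell out (writing $P_{\vec j}(z)=\prod_{\rho}(1-z/\rho)$ and expanding $\ln(1-z/\rho)$) is the same computation the paper later displays in the proof of Theorem~\ref{thm:typeI}. There is nothing to add.
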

\medskip

For a polynomial $g(z)\in \mathbb{C}[z]$, we use ${\bar g}(z)$ to denote the polynomial obtained from $g(z)$ by changing all the  coefficients to their conjugate. From \eqref{eq:cdB} it is clear
\begin{align}\label{eq:conjugate}
P_{j_1,\ldots j_f}(z)={\bar P}_{r_1-j_1,\ldots,r_f-j_f}(z).
\end{align}
This equation will be used in the next two sections.

Recall that the characteristic polynomial of $\alp\in\mathbb{F}_{q^n}$ over $\Fq$ is defined as
\begin{align*}
Q_{\alp}(x)=\prod_{j=0}^{n-1}(x-\alp^{q^j}).
\end{align*}
For $\eps\in \Ecal$, define
\begin{align} \label{eq:variety}
F_q(n,\eps)=\#\{\alp\in\mathbb{F}_{q^n}:\langle Q_{\alp}\rangle=\eps\}.
\end{align}

An application of the multinomial theorem and a generalized M\"{o}bius inversion-type argument gives the enumeration of irreducible polynomials with prescribed coefficients. This equivalence follows the approach of Miers and Ruskey \cite{Miers}.
A method used by Hayes \cite{Hay65}, Hsu \cite{Hsu} and Voloch \cite{Voloch}  and others relates the enumeration of irreducible polynomials  of  degree $n$ with prescribed coefficients (equivalently,  formulae for $F_q(n,\eps)$) to the number
of points over $\mathbb{F}_{q^n}$  of certain curves defined over $\mathbb{F}_q$ whose function fields are subfields of the so-called cyclotomic functions fields.
Granger \cite{Gra19} studied $F_q(n,\eps)$ for the type II equivalence class in detail and used it to count irreducible polynomials with prescribed leading coefficients. Through a transformation of the problem of counting the number of elements of $\mathbb{F}_{q^n}$ with prescribed traces to the problem of counting the number of elements for which linear combinations of the trace functions evaluate to $1$,  he reduced the varieties in \eqref{eq:variety} to Artin-Schreier curves of smaller genus and then computed the corresponding zeta functions using Lauder-Wan algorithm \cite{LW}.

Our next theorem connects $N_n(\eps)$ with $F_q(n,\eps)$. This gives an alternative way of computing these zeta functions.
\begin{thm}\label{thm:Zeta}
For both types, we have
\begin{equation} \label{eq:NF}
N_n(\eps)=F_q(n,\eps).
\end{equation}
Moreover, with $\eps=\xi_1^{t_1}\cdots\xi_f^{t_f}$, we have
\begin{itemize}
\item[(I)] the logarithm of the  Hasse-Weil zeta function of $N_n(\eps)$ for type~I is equal to
\begin{align*}
\frac{1}{q-1}q^{1-\ell-t}\left(\ln \frac{1-z}{1-qz}+\sum_{ \vec{j}\in \Jcal}\prod_{i=1}^f\omega_{r_i}^{-j_it_i}\ln P_{\vec{j}}(z)\right),\quad and 
\end{align*}
\item[(II)]
the logarithm of the Hasse-Weil zeta function of $N_n(\eps)$ for type~II is equal to
\begin{align*}
q^{-\ell}\left(\ln \frac{1}{1-qz}+\sum_{ \vec{j}\in \Jcal}\prod_{i=1}^f\omega_{r_i}^{-j_it_i}\ln P_{\vec{j}}(z)\right).
\end{align*}
\end{itemize}
\end{thm}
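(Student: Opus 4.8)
The plan is to prove the single identity $N_n(\eps)=F_q(n,\eps)$ first; once that is in hand, both zeta-function assertions follow almost formally from the definition of $N_n(\eps)$ together with Theorem~\ref{thm:main}. I would establish $N_n(\eps)=F_q(n,\eps)$ by evaluating $F_q(n,\eps)$ combinatorially and matching it, term for term, with the expansion of $N_n(\eps)$ already recorded in \eqref{NofI}.

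To evaluate the count, I would partition $\mathbb{F}_{q^n}$ according to the degree $d$ of the minimal polynomial $m_\alp$ of each $\alp$ over $\Fq$; necessarily $d\mid n$ and $\alp\in\mathbb{F}_{q^d}\subseteq\mathbb{F}_{q^n}$. The $n$ Frobenius conjugates $\alp,\alp^q,\ldots,\alp^{q^{n-1}}$ run through the $d$ distinct roots of $m_\alp$, each appearing $n/d$ times, so the characteristic polynomial factors as $Q_\alp=m_\alp^{\,n/d}$. By the multiplicativity of $\langle\cdot\rangle$ established in Proposition~\ref{group}, this yields $\langle Q_\alp\rangle=\langle m_\alp\rangle^{n/d}$ (in type~I the convention $\langle g\rangle=0$ when $g(0)=0$ accounts for $\alp=0$, whose only preimage is the polynomial $x$; in type~II one has instead $\langle x\rangle=\langle 1\rangle$). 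Since each irreducible $f\in\Ical_d$ contributes exactly its $d$ roots, I obtain
\[
F_q(n,\eps)=\sum_{d\mid n}d\sum_{\substack{\eps_1\in\Ecal\\ \eps_1^{\,n/d}=\eps}}I_d(\eps_1).
\]
On the other hand \eqref{NofI} reads $N_n(\eps)=n\sum_{k\mid n}\frac{1}{k}\sum_{\eps_1^{\,k}=\eps}I_{n/k}(\eps_1)$, and the substitution $d=n/k$ turns this into the identical double sum, giving $N_n(\eps)=F_q(n,\eps)$.

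For the zeta-function statements, by definition the Hasse-Weil zeta function attached to the point counts $F_q(n,\eps)=N_n(\eps)$ has logarithm $\sum_{n\ge 1}\frac{N_n(\eps)}{n}z^n$. Because $N_n(\eps)=n[z^n\eps]\ln F(z)$ and $\ln F$ carries no constant term, each summand equals $\bigl([z^n\eps]\ln F\bigr)z^n$, so summing over $n$ reassembles exactly the $\eps$-component $[\eps]\ln F(z)$. Invoking the explicit evaluations of $[\eps]\ln F$ from Theorem~\ref{thm:main}, namely the type~I formula and the type~II formula \eqref{eq:N2}, then produces the two displayed expressions verbatim.

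The routine parts are the combinatorial reindexing and the final reassembly; the one step demanding care is the passage $Q_\alp=m_\alp^{\,n/d}\Rightarrow\langle Q_\alp\rangle=\langle m_\alp\rangle^{n/d}$ under the two different equivalence relations. Specifically, I would verify that the type~I constant-term convention and the type~II collapse $\langle x\rangle=\langle 1\rangle$ each make the $\alp=0$ contribution consistent with the irreducible-polynomial count on the $N_n$ side, so that no spurious terms appear and the two double sums agree exactly.
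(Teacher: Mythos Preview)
Your proof is correct and follows essentially the same approach as the paper: both arguments rest on the identity $Q_\alp=m_\alp^{\,n/d}$ and the bijection between elements of $\mathbb{F}_{q^n}$ and roots of irreducibles of degree dividing $n$, and both deduce the zeta-function statements by recognizing $\sum_{n\ge 1}\frac{N_n(\eps)}{n}z^n=[\eps]\ln F(z)$ and invoking Theorem~\ref{thm:main}. The only organizational difference is that the paper establishes the group-algebra identity $n[z^n]\ln F=\sum_{\alp\in\mathbb{F}_{q^n}}\langle Q_\alp\rangle$ and then extracts the $\eps$-coefficient, whereas you extract coefficients first via \eqref{NofI} and match scalar sums.
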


\proof
 Equation~\eqref{eq:Irre}  can be written as
\begin{align*}
\ln F=\sum_{m\ge1}\sum_{k\mid m}\sum_{f\in \Ical_{m/k}}\frac{\langle f^k \rangle}{k}z^m.
\end{align*}

For $k\mid n$, each $f\in \Ical_{n/k}$ has $n/k$ roots $\alp\in\mathbb{F}_{q^n}$. For each $\alp\in\mathbb{F}_{q^n}$ with $f(\alp)=0$,  we have $Q_\alp=f^k$.

Therefore,
\begin{align*}
n[z^n]\ln F&=\sum_{k\mid n}\sum_{f\in \Ical _{n/k}}\frac{n}{k}{\langle f^k \rangle}\\
&=\sum_{k\mid n}\sum_{f\in \Ical_{n/k}}\sum_{\alp\in\mathbb{F}_{q^n}:f(\alp)=0}{\langle f^k \rangle}\\
&=\sum_{k\mid n}\sum_{f\in \Ical_{n/k}}\sum_{\alp\in\mathbb{F}_{q^n}:f(\alp)=0}{\langle Q_\alp \rangle}.
\end{align*}

Each $\alp\in\mathbb{F}_{q^n}$ is the root of some unique irreducible polynomial $f$ over $\Fq$ of degree $n/k$ with $k\mid n$ and $f^k=Q_\alp$, hence
we have
\begin{align*}
n[z^n]\ln F&=\sum_{\alp\in\mathbb{F}_{q^n}}\langle Q_\alp \rangle.
\end{align*}
which gives \eqref{eq:NF}.
The claim about the zeta functions follows from the definition of zeta function and Theorem~\ref{thm:main}.
\qed

\section{Type~I examples} \label{TypeI}

Let $\ell \geq 1$ and $t\geq 1$ be fixed integers.   In this section, we use Theorem~\ref{thm:main} to derive formulas for the number of monic irreducible polynomials over $\fq$ with prescribed $\ell$ leading coefficients and $t$ ending coefficients.
Throughout this section, we let $\tau=\ell+t-1$.

\begin{thm}\label{thm:typeI}
Let $q$ be a prime power, $\ell$ and $t$ be positive integers.  Let $\xi_1, \ldots, \xi_f$ be generators of
type~I group $\Ecal$ with order $r_1, \ldots, r_f$ respectively.   Let $\omega_{r_i} = exp(2\pi i/r_i)$ for $1\leq i \leq f$.  Let $c_{d, \vec{j}}$ be defined by  \eqref{eq:cdB}
and $P_{\vec{j}}(z)$ is defined in \eqref{eq:PB}.
 Suppose each polynomial $P_{\vec{j}}(z)$ is factored into linear factors, and let $\Scal_{\vec{j}}$ be the multiset of all the complex roots of $P_{\vec{j}}(z)$. Then the number of monic irreducible polynomials over $\mathbb{F}_q$ with the prescribed first $\ell$ coefficients  and the last $t$ coefficients    $\eps=\xi_1^{t_1}\cdots \xi_f^{t_f}$  is
\begin{align}
I_d(\vec{t})=\frac{1}{d}\sum_{k|d}\mu(k)\sum_{\vec{s}\in \Rcal}
N_{d/k}(\vec{s})\left\llbracket k\vec{s}\equiv \vec{t} ~(\bmod{~\vec{r}})\right\rrbracket,
\end{align}
where
\begin{align}
N_n(\vec{s})&=\frac{1}{q-1}q^{-\tau}\left(q^n-1\right)+\frac{1}{q-1}q^{-\tau}\sum_{\vec{j}\in \Jcal}
 \prod_{i=1}^{f}\om_{r_i}^{-j_is_i}n[z^n]\ln P_{\vec{j}}(z)\label{eq:factorI1}\\
 &=\frac{1}{q-1}q^{-\tau}\left(q^n-1\right)-\frac{1}{q-1}q^{-\tau}
\sum_{\vec{j}\in \Jcal}\prod_{i=1}^f\omega_{r_i}^{-j_is_i}\sum_{\rho\in \Scal_{\vec{j}}}\rho^{-n}.\label{eq:factorI}
\end{align}
\end{thm}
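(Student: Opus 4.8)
The plan is to assemble the statement from the two structural results already in hand: the M\"obius-type inversion of Proposition~\ref{prop:IN} and the closed form for the group-algebra coefficients of $\ln F$ in Theorem~\ref{thm:main}(I). The displayed formula for $I_d(\vec t)$ is, at bottom, a bookkeeping transcription of Proposition~\ref{prop:IN}. Using the isomorphism $\Ecal\cong C_{r_1}\times\cdots\times C_{r_f}$, I would reindex the inner sum over $\eps_1\in\Ecal$ by $\vec s\in\Rcal$ through the bijection $\eps_1=\xi_1^{s_1}\cdots\xi_f^{s_f}$, and write $N_{d/k}(\vec s):=N_{d/k}(\eps_1)$. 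With $\eps=\xi_1^{t_1}\cdots\xi_f^{t_f}$, the indicator $\llbracket\eps_1^k=\eps\rrbracket$ then unwinds coordinatewise: because the $\xi_i$ are independent generators of the direct factors, $\eps_1^k=\eps$ holds precisely when $\xi_i^{ks_i}=\xi_i^{t_i}$ for every $i$, i.e.\ $ks_i\equiv t_i\pmod{r_i}$, which is exactly $k\vec s\equiv\vec t\pmod{\vec r}$. Substituting this into Proposition~\ref{prop:IN} produces the claimed expression for $I_d(\vec t)$.

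The second task is the explicit formula for $N_n(\vec s)$. Starting from the definition $N_n(\eps)=n[z^n]\bigl([\eps]\ln F\bigr)$ applied to $\eps=\xi_1^{s_1}\cdots\xi_f^{s_f}$, I would insert the type~I coefficient formula of Theorem~\ref{thm:main}(I), noting $q^{1-\ell-t}=q^{-\tau}$ since $\tau=\ell+t-1$. It then remains only to read off the coefficient of $z^n$ in each logarithmic term. The principal part expands as $\ln\frac{1-z}{1-qz}=\ln(1-z)-\ln(1-qz)=\sum_{n\ge1}\frac{q^n-1}{n}z^n$, so $n[z^n]\ln\frac{1-z}{1-qz}=q^n-1$; combined with the sum over $\vec j\in\Jcal$ of $\prod_{i}\om_{r_i}^{-j_is_i}\,n[z^n]\ln P_{\vec j}(z)$, this is exactly \eqref{eq:factorI1}.

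To pass from \eqref{eq:factorI1} to \eqref{eq:factorI}, I would invoke the hypothesis that each $P_{\vec j}(z)$ splits into linear factors. Since $P_{\vec j}(0)=1$ by \eqref{eq:PB}, the factorization is forced into the normalized shape $P_{\vec j}(z)=\prod_{\rho\in\Scal_{\vec j}}(1-z/\rho)$, where $\Scal_{\vec j}$ is the multiset of roots. Taking logarithms and using $\ln(1-z/\rho)=-\sum_{n\ge1}\frac{1}{n}\rho^{-n}z^n$ gives $n[z^n]\ln P_{\vec j}(z)=-\sum_{\rho\in\Scal_{\vec j}}\rho^{-n}$, and substituting this into \eqref{eq:factorI1} yields \eqref{eq:factorI}.

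Since the analytic content is carried entirely by Proposition~\ref{prop:IN} and Theorem~\ref{thm:main}, there is no genuine obstacle; the two points demanding care are the coordinatewise translation of the group condition $\eps_1^k=\eps$ into the simultaneous congruences $k\vec s\equiv\vec t\pmod{\vec r}$, and the verification that $P_{\vec j}(0)=1$ forces the normalization $P_{\vec j}(z)=\prod_{\rho}(1-z/\rho)$ rather than a scaled variant, so that the reciprocal roots $\rho^{-n}$ enter with the correct sign and coefficient.
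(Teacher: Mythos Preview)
Your proposal is correct and follows essentially the same route as the paper: the paper's proof simply states that \eqref{eq:factorI1} follows immediately from Theorem~\ref{thm:main}, and derives \eqref{eq:factorI} from the factorization $\ln P_{\vec j}(z)=\sum_{\rho\in\Scal_{\vec j}}\ln(1-z/\rho)$ together with the expansion $\ln(1-z/\rho)=-\sum_{n\ge1}\rho^{-n}z^n/n$, while the M\"obius formula for $I_d(\vec t)$ is taken as a direct transcription of Proposition~\ref{prop:IN}. Your write-up makes explicit the details the paper leaves tacit (the coordinatewise congruence interpretation of $\eps_1^k=\eps$, the identity $q^{1-\ell-t}=q^{-\tau}$, the computation $n[z^n]\ln\frac{1-z}{1-qz}=q^n-1$, and the normalization $P_{\vec j}(0)=1$), but the argument is the same.
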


\begin{proof}
Equation~\eqref{eq:factorI1} follows immediately from Theorem~\ref{thm:main}.

Now \eqref{eq:factorI} follows from
\[
\ln P_{\vec{j}}(z)=\sum_{\rho\in \Scal_{\vec{j}}}\ln(1-z/\rho),
\]
and the expansion
\[
\ln(1-z/\rho)=-\sum_{n\ge 1}\rho^{-n}z^n/n.
\]
\qed
\end{proof}

\begin{example}
Let $q=2,\ell=t=2$.  In this case, $|\Ecal|=2^3$. The generators are $\xi_1=\langle x+1\rangle$ and $\xi_2=\langle x^4+x+1\rangle$ of orders 4 and 2, respectively. We have $\Ecal_1=\{\xi_1\}$, $\Ecal_2=\{\xi_1^2,\xi_1^3\}$, $\Ecal_3=\{\langle 1\rangle,\xi_1\xi_2,\xi_1^2\xi_2,\xi_1^3\}$. Using \eqref{eq:cdB},  we have
\begin{align*}
c_{1, (j_1,j_2)}&=i^{j_1},\\
c_{2, (j_1,j_2)}&=i^{2j_1}+i^{3j_1},\\
c_{3, (j_1,j_2)}&=1+i^{j_1}(-1)^{j_2}+i^{2j_1}(-1)^{j_2}+i^{3j_1},\\
P_{j_1,j_2}(z)&=1+c_{1, (j_1,j_2)}z+c_{2, (j_1,j_2)}z^2+c_{3, (j_1,j_2)}z^3,
\end{align*}
and hence   $P_{1,j}(z)={\bar P}_{3,j}(z)$,  and
\begin{align*}
P_{1,1}(z)&=1+iz-(1+i)z^2+2(1-i)z^3,\\
P_{1,2}(z)&=(1-z)(1+(1+i)z),\\
P_{2,1}(z)&=P_{2,2}(z)=1-z,\\
P_{4,1}(z)&=1+z+2z^2,\\
P_{1,1}(z)P_{3,1}(z)&=1-z^2+2z^3-2z^4+8z^6,\\
P_{1,2}(z)P_{3,2}(z)&=(1-z)^2(1+2z+2z^2).
\end{align*}

For $\eps=\xi_1^{2s_1}\xi_2^{s_2}$, we may use \eqref{eq:conjugate} to combine the conjugate pairs to obtain
\begin{align}
N_n(\xi_1^{2s_1}\xi_2^{s_2})&=\frac{1}{8}\left(2^n-1\right)
-\frac{1}{8}\left(1+(-1)^{s_2}+2(-1)^{s_1}\right)\nonumber\\
&~+(-1)^{s_2}\frac{n}{8}[z^n]\ln(1+z+2z^2)\nonumber\\
&~+(-1)^{s_1}\frac{n}{8}[z^n]\ln (1+2z+2z^2)\\
&~+(-1)^{s_1+s_2}\frac{n}{8}[z^n]\ln (1-z^2+2z^3-2z^4+8z^6). \nonumber
\end{align}

Using Maple to expand the logarithmic functions, we obtain Table~\ref{table222}.
\end{example}

\begin{table}
\begin{center}
\begin{tabular}{|r|r|r|r|r|}
\hline
$n$&$N_n(\xi_1^0\xi_2^0)$&$N_n(\xi_2)$&$N_n(\xi_1^2)$&$N_n(\xi_1^2\xi_2)$\\
\hline
1 & 0& 0&0 &0 \\
\hline
2 &0 &0 & 1& 0\\
\hline
3 & 0& 0 & 0& 3\\
\hline
4 &1&4 & 2& 0\\
\hline
5 &5& 0 & 5& 5\\
\hline
6&9 & 6 & 4& 12\\
\hline
7 & 21&14 &7 & 21\\
\hline
8 &31 & 24& 40& 32 \\
\hline
9 & 63& 72 & 63& 57\\
\hline
10 & 125& 130 &116 & 140\\
\hline
11 & 253& 242 & 275& 253\\
\hline
12 & 523& 532 & 512&480\\
\hline
13 & 923& 1092 & 1079&1001\\
\hline
14 & 2065&2030 &2052 & 2044\\
\hline
15 & 4145& 4110 &4115 &4013\\
\hline
16 & 8143& 8112 &8128 & 8384\\
\hline
17 &16303& 16592 &16439 &16201\\
\hline
18 &33093& 32442 &32692 &32844\\
\hline
19 & 65493&65322 &65379 & 65949\\
\hline
20 & 131731 & 130924&130112 & 131520\\
\hline
\end{tabular}
\end{center}
\caption{Values of $N_n(\xi_1^{2s_1}\xi_2^{s_2})$ for $q=2,\ell =2, t=2$.}
\label{table222}
\end{table}

\begin{example}
Let $q=3$, $\ell=2$ and $t=1$.  In this case,  $|\Ecal|=18$. The generators are $\xi_1=\langle x+1\rangle$ and $\xi_2=\langle x+2\rangle$ of orders 3 and 6, respectively. We have
\begin{align}
\Ecal_1&=\{\xi_1,\xi_2\},\nonumber\\
\Ecal_2&=\{\xi_2^2,\xi_1\xi_2,\xi_1\xi_2^5,\xi_1^2,\xi_1^2\xi_2,\xi_1^2\xi_2^2\}.
\end{align}

Using \eqref{eq:cdB},  we have
\begin{align*}
c_{1, (j_1,j_2)}&=\om_3^{j_1}+\om_6^{j_2},\\
c_{2, (j_1,j_2)}&=\om_6^{2j_2}+\om_3^{j_1}\om_6^{j_2}+\om_3^{j_1}\om_6^{5j_2}+\om_3^{2j_1}+\om_3^{2j_1}\om_6^{j_2}
+\om_3^{2j_1}\om_6^{2j_2},\\
P_{j_1,j_2}(z)&=1+c_{1, (j_1,j_2)}z+c_{2,(j_1,j_2)}z^2.
\end{align*}
We have $P_{1,j}(z)={\bar P}_{2,6-j}(z)$, $P_{3,j}(z)={\bar P}_{3,6-j}(z)$,
and
\begin{align*}
P_{1,1}(z)&=1+i\sqrt{3}z,\\
P_{1,2}(z)&=(1-z)(1+i\sqrt{3}z),\\
P_{1,3}(z)&=(1+i\sqrt{3}z)\left(1+i\sqrt{3}\om_3z\right),\\
P_{1,4}(z)&=1-z,\\
P_{1,5}(z)&=1-3z^2,\\
P_{1,6}(z)&=(1-z)\left(1-i\sqrt{3}\om_3z\right),\\
P_{3,1}(z)&=(1+i\sqrt{3}z)\left(1-i\sqrt{3}\om_6z\right),\\
P_{3,2}(z)&=(1-z)\left(1-i\sqrt{3}\om_3z\right),\\
P_{3,3}(z)&=1.
\end{align*}
Applying \eqref{eq:factorI}  and combining conjugate pairs, we obtain
\begin{align}
N_n(\xi_1^{t_1}\xi_2^{t_2})
&=\frac{1}{18}\left(3^{n}-1\right)
-\frac{2}{18}3^{n/2}\Re\left(\om_6^{-2t_1-t_2}(-i)^n\right)
-\frac{2}{18}3^{n/2}\Re\left(\om_3^{-t_1-t_2}(-i)^n\right)\nonumber\\
&~~-\frac{2}{18}\Re\left(\om_3^{-t_1-t_2}\right)
-\frac{2}{18}3^{n/2}\Re\left(\om_6^{-2t_1-3t_2}\left((-i)^n+(-i\om_3)^n\right)\right)
-\frac{2}{18}\Re\left(\om_3^{-t_1-2t_2}\right)\nonumber\\
&~~-\frac{4\llbracket 2\mid n\rrbracket}{18}3^{n/2}\Re\left(\om_6^{-2t_1-5t_2}\right)
-\frac{2}{18}3^{n/2}\Re\left(\om_3^{-t_1}(i\om_3)^n\right)
-\frac{2}{18}\Re\left(\om_3^{-t_1}\right)\nonumber\\
&~~-\frac{2}{18}3^{n/2}\Re\left(\om_6^{-t_2}\left((-i)^n+(i\om_6)^n\right)\right)
-\frac{2}{18}\Re\left(\om_6^{-2t_2}\right)
-\frac{2}{18}3^{n/2}\Re\left(\om_6^{-2t_2}(i\om_3)^n\right)\nonumber\\
&=\frac{1}{18}\left(3^{n}-1\right)-\frac{4\llbracket 2\mid n\rrbracket}{18}3^{n/2}\cos\left(\frac{\pi(2t_1+5t_2)}{3}\right)  \\
&~~-\frac{2}{18}\left(\cos\left(\frac{2\pi(t_1+t_2)}{3}\right)+
\cos\left(\frac{2\pi (t_1+2t_2)}{3}\right)+\cos\left(\frac{2\pi t_1}{3}\right)+\cos\left(\frac{2\pi t_2}{3}\right) \right)\nonumber\\
&~~-\frac{2}{18}3^{n/2}\left(\cos\left(\frac{3n\pi}{2}-\frac{(2t_1+t_2)\pi}{3}\right)+
\cos\left(\frac{3n\pi}{2}-\frac{2(t_1+t_2)\pi}{3}\right)   \right)\nonumber\\
&~~-\frac{2}{18}3^{n/2}(-1)^{t_2}\left(\cos\left(\frac{3n\pi}{2}-\frac{2t_1\pi}{3}\right)+
\cos\left(\frac{3n\pi}{2}+\frac{2(n-t_1)\pi}{3}\right)   \right)\nonumber\\
&~~-\frac{2}{18}3^{n/2}\left(\cos\left(\frac{n\pi}{2}+\frac{2(n-t_1)\pi}{3}\right)+
\cos\left(\frac{3n\pi}{2}-\frac{t_2\pi}{3}\right)   \right)\nonumber\\
&~~-\frac{2}{18}3^{n/2}\left(\cos\left(\frac{n\pi}{2}+\frac{(n-t_2)\pi}{3}\right)+
\cos\left(\frac{n\pi}{2}+\frac{2(n-t_2)\pi}{3}\right)   \right).\nonumber
\end{align}

Using Maple, we obtain Tables~\ref{table321I}, \ref{table321II} and \ref{table321III}.

\end{example}

\begin{table}
\begin{center}
\begin{tabular}{|r|r|r|r|r|r|r|}
\hline
$n$&$N_n(\xi_2^0)$&$N_n(\xi_2^1)$&$N_n(\xi_2^2)$&$N_n(\xi_2^3)$&$N_n(\xi_2^4)$&$N_n(\xi_2^5)$\\
\hline
1 & 0&1& 0& 0&  0 &0\\
\hline
2 &0 &0 &1&0 &0 & 0 \\
\hline
3 & 1& 0& 0& 1 &3& 3\\
\hline
4 &0&4 & 8&8&5 & 4\\
\hline
5 &10&15&15& 10 &15&6  \\
\hline
6&58 &36& 45& 40& 45 &36\\
\hline
7 & 112&99& 126&112& 126 &126\\
\hline
8 &328 &360&369& 400 &396 &360\\
\hline
9 & 1093&1134& 1134& 1093&1053 &1053 \\
\hline
10 & 3280& 3240& 3240&3280 &3321 &3240 \\
\hline
11 & 9922& 9801&9801& 9922&9801 &10044\\
\hline
12 & 28714& 29484& 29565& 29848&29565& 29484 \\
\hline
13 & 88816& 89181 &88452&88816&88452&88452 \\
\hline
14 & 265720 &265356& 266085&265720&265356& 265356 \\
\hline
15 & 797161& 796068&796068&797161&798255&798255 \\
\hline
16 &2388568 &2391120&2394036&  2394400 &2391849& 2391120\\
\hline
17 &7172266 & 7175547&7175547& 7172266&7175547& 7168986\\
\hline
18 &21536482& 21520080& 21526641& 21523360&21526641&21520080 \\
\hline
19 &64563520&64553679&64573362&64563520& 64573362&64573362 \\
\hline
20 & 193684000 & 193706964&193713525& 193736488&193733208& 193706964\\
\hline
\end{tabular}
\end{center}
\caption{Values of $N_n(\xi_2^j)$ for $q=3,\ell =2, t=1$.}
\label{table321I}
\end{table}

\begin{table}
\begin{center}
\begin{tabular}{|r|r|r|r|r|r|r|}
\hline
$n$&$N_n(\xi_1)$&$N_n(\xi_1\xi_2)$&$N_n(\xi_1\xi_2^2)$&$N_n(\xi_1\xi_2^3)$&$N_n(\xi_1\xi_2^4)$&$N_n(\xi_1\xi_2^5)$\\
\hline
1 & 1&0& 0& 0&  0 &0\\
\hline
2 &0 &0 &0&0 &0 & 2 \\
\hline
3 & 0& 3& 0& 3 &3& 3\\
\hline
4 &5&4 & 2&4&2 & 4\\
\hline
5 &15&15&15& 15  &15&15  \\
\hline
6&45 &36& 27& 36& 36 &54\\
\hline
7 & 99&126& 126&126& 126 &126\\
\hline
8 &396 &360&341& 360 &396 &360\\
\hline
9 & 1134&1053& 1134& 1053&1053 &1053 \\
\hline
10 & 3321& 3240& 3240&3240 &3402 &3402 \\
\hline
11 & 9801& 9801&9801& 9801&9801 &9801\\
\hline
12 & 29565& 29484& 29565&29484&29808& 29484 \\
\hline
13 & 89181& 88452 &88452&88452&88452&88452 \\
\hline
14 & 265356 &265356& 265356&265356 &265356& 266814 \\
\hline
15 &  796068& 798255&796068&798255&798255&798255 \\
\hline
16 &2391849 &2391120& 2389662 &  2391120 & 2389662& 2391120\\
\hline
17 & 7175547& 7175547&7175547& 7175547&7175547& 7175547\\
\hline
18 &21526641& 21520080& 21513519& 21520080&21520080&21533202\\
\hline
19 &64553679&64573362&64573362&64573362& 64573362&64573362 \\
\hline
20 & 193733208 & 193706964  & 193693842& 193706964&193733208& 193706964\\
\hline
\end{tabular}
\end{center}
\caption{Values of $N_n(\xi_1\xi_2^j)$ for $q=3,\ell =2, t=1$.}
\label{table321II}
\end{table}

\begin{table}
\begin{center}
\begin{tabular}{|r|r|r|r|r|r|r|}
\hline
$n$&$N_n(\xi_1^2)$&$N_n(\xi_1^2\xi_2)$&$N_n(\xi_1^2\xi_2^2)$&$N_n(\xi_1^2\xi_2^3)$&$N_n(\xi_1^2\xi_2^4)$&$N_n(\xi_1^2\xi_2^5)$\\
\hline
1 &0&0& 0& 0&  0 &0\\
\hline
2 &1 &2 &2&0 &0 & 0 \\
\hline
3 & 3& 0& 0& 0 &3& 0\\
\hline
4 &8&4 & 8&4&2 & 4\\
\hline
5 &6&15&15& 15 &15&15  \\
\hline
6&45 &54& 36& 36&27 &36\\
\hline
7 & 126&  126 & 126&126& 126 &126\\
\hline
8 &369 &360&342& 360 &342 &360\\
\hline
9 & 1053& 1134 & 1134& 1134&1053 &1134 \\
\hline
10 & 3240& 3402& 3240&3240 &3240 &3240 \\
\hline
11 & 10044& 9801&9801& 9801&9801 &9801\\
\hline
12 & 29565& 29484& 29808&29484 &29565 & 29484 \\
\hline
13 & 88452& 88452 &88452&88452&88452&88452 \\
\hline
14 & 266085 &266814& 266814&265356 &265356& 265356 \\
\hline
15 & 798255& 796068 &796068&796068&798255&796068 \\
\hline
16 &2394036 &2391120&2394036&  2391120 & 2389662& 2391120\\
\hline
17 &7168986& 7175547&7175547& 7175547&7175547& 7175547\\
\hline
18 &21526641& 21533202 & 21520080& 21520080&21513519&21520080\\
\hline
19 &64573362&64573362&64573362&64573362& 64573362&64573362 \\
\hline
20 & 193713525 & 193706964&193693842& 193706964&193693842& 193706964\\
\hline
\end{tabular}
\end{center}
\caption{Values of $N_n(\xi_1^2\xi_2^j)$ for $q=3,\ell =2, t=1$.}
\label{table321III}
\end{table}

\section{Type~II examples} \label{TypeII}

The number of irreducible polynomials with prescribed leading coefficients (i.e., trace and subtraces) are treated in detail by Granger \cite{Gra19}. Our generating function approach is connected to Granger's approach through \eqref{eq:NF}. We would like to point out that our Theorem~\ref{thm:Zeta} gives an alternative way of computing $N_d(\eps)$, and this will be demonstrated through examples below.

We first note that the group $\Ecal$ is an abelian group of order $q^{\ell}=p^{r\ell}$, and thus it is a direct product of cyclic $p$-groups. For these type II groups, for any $q$, $\ell$, we suppose the generators of the group $\Ecal$ are
$\xi_1, \xi_2, \ldots, \xi_f$, with order $r_i$,  $1\leq i \leq f$ respectively.   Here $r_1 \cdots r_f = q^{\ell}$.  Let $\tau = \ell -1$.


The following result is analogous to Theorem~\ref{thm:typeI}. Its proof is essentially the same as that of Theorem~\ref{thm:typeI}.

\begin{thm} \label{thm:typeII}
Let $q$ be a prime power and $\ell$  be  a positive integer.  Let $\xi_1, \ldots, \xi_f$ be generators of
type II group $\Ecal$ with order $r_1, \ldots, r_f$ respectively.    Let $\omega_{r_i} = exp(2\pi i/r_i)$ for $1\leq i \leq f$.  Let  $c_{d, \vec{j}}$ be defined as  in  \eqref{eq:cdB} and $P_{\vec{j}}(z)$ is defined in \eqref{eq:PB}.
Suppose each polynomial $P_{\vec{j}}(z)$ is factored into linear factors, and let $\Scal_{\vec{j}}$ be the multiset of all the complex roots of $P_{\vec{j}}(z)$.
For
$\eps = \xi_1^{t_1} \cdots  \xi_f^{t_f}$, we shall use $I_d(\vec{t})$ to denote $I_d(\eps)$ and so on.
Then the number of monic irreducible polynomials over $\mathbb{F}_q$ with  prescribed $\ell$ leading coefficients   $\eps=\xi_1^{t_1}\cdots \xi_f^{t_f}$  is
\begin{align}
I_d(\vec{t})=\frac{1}{d}\sum_{k|d}\mu(k)\sum_{\vec{s}\in \Rcal}
N_{d/k}(\vec{s})\left\llbracket k\vec{s}\equiv \vec{t} ~(\bmod{~\vec{r}})\right\rrbracket,
\end{align}
where
\begin{align}
N_n(\vec{t})&=q^{n-\ell}+q^{-\ell}\sum_{ \vec{j}\in \Jcal}\prod_{i=1}^f\omega_{r_i}^{-j_it_i}n[z^n]\ln P_{\vec{j}}(z)    \\
&=q^{n-\ell}-q^{-\ell}\sum_{ \vec{j}\in \Jcal}\prod_{i=1}^f\omega_{r_i}^{-j_it_i}\sum_{\rho\in\Scal_{\vec{j}}} \rho^{-n}.\label{eq:factorII}
\end{align}
\end{thm}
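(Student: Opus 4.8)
The plan is to reduce everything to Theorem~\ref{thm:main}(II) and Proposition~\ref{prop:IN}, mirroring the proof of Theorem~\ref{thm:typeI} verbatim. First I would recall the defining relation $N_n(\vec t)=n\,[z^n\eps]\ln F$, with $\eps=\xi_1^{t_1}\cdots\xi_f^{t_f}$. Substituting the type~II expression for $[\eps]\ln F$ from \eqref{eq:N2} and extracting the coefficient of $z^n$, the only analytic input needed is the expansion $\ln\frac{1}{1-qz}=\sum_{n\ge 1}q^n z^n/n$, which gives $n\,[z^n]\ln\frac{1}{1-qz}=q^n$. This immediately produces the first expression $N_n(\vec t)=q^{n-\ell}+q^{-\ell}\sum_{\vec j\in\Jcal}\prod_{i=1}^f\om_{r_i}^{-j_it_i}\,n\,[z^n]\ln P_{\vec j}(z)$.

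For the second expression I would use the hypothesis that each $P_{\vec j}(z)$ splits into linear factors. Since $P_{\vec j}(0)=1$ by \eqref{eq:PB}, we may write $P_{\vec j}(z)=\prod_{\rho\in\Scal_{\vec j}}(1-z/\rho)$, so that $\ln P_{\vec j}(z)=\sum_{\rho\in\Scal_{\vec j}}\ln(1-z/\rho)$, and term by term $\ln(1-z/\rho)=-\sum_{n\ge 1}\rho^{-n}z^n/n$. Hence $n\,[z^n]\ln P_{\vec j}(z)=-\sum_{\rho\in\Scal_{\vec j}}\rho^{-n}$, and substituting this into the first expression yields the closed form \eqref{eq:factorII}.

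Finally, for the formula for $I_d(\vec t)$ I would invoke Proposition~\ref{prop:IN} with $\eps=\xi_1^{t_1}\cdots\xi_f^{t_f}$. Parametrizing $\eps_1=\xi_1^{s_1}\cdots\xi_f^{s_f}$ with $\vec s\in\Rcal$, the sum over $\eps_1\in\Ecal$ becomes a sum over $\vec s\in\Rcal$, and since $\eps_1^k=\xi_1^{ks_1}\cdots\xi_f^{ks_f}$, the indicator $\llbracket\eps_1^k=\eps\rrbracket$ is exactly the componentwise congruence $\llbracket k\vec s\equiv\vec t\pmod{\vec r}\rrbracket$. This reproduces the stated M\"obius-type expression for $I_d(\vec t)$.

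There is essentially no hard step; the argument is purely formal given the earlier results. The only place needing care is the translation of the group condition $\eps_1^k=\eps$ into the system of congruences $ks_i\equiv t_i\pmod{r_i}$, which relies on the direct-product decomposition $\Ecal\cong C_{r_1}\times\cdots\times C_{r_f}$ and on the fact that $\ord(\xi_i)=r_i$, so that the exponent vector of an element is well defined modulo $\vec r$. Everything else is the same bookkeeping as in the proof of Theorem~\ref{thm:typeI}, which is why the statement is described as analogous.
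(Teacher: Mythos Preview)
Your proposal is correct and follows exactly the approach the paper takes: it notes that the proof is ``essentially the same as that of Theorem~\ref{thm:typeI}'', deriving the first expression for $N_n(\vec t)$ directly from Theorem~\ref{thm:main}(II), the second from the factorization $\ln P_{\vec j}(z)=\sum_{\rho\in\Scal_{\vec j}}\ln(1-z/\rho)$ and the expansion of $\ln(1-z/\rho)$, with the $I_d(\vec t)$ formula being Proposition~\ref{prop:IN} rewritten in the $\vec s$-coordinates. If anything, your write-up is slightly more explicit than the paper's (which omits the translation of $\llbracket\eps_1^k=\eps\rrbracket$ into the congruence $k\vec s\equiv\vec t\pmod{\vec r}$ entirely).
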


The following lemma is useful before working out some examples.

\begin{lemma}\label{lemma:generator}
Let $q=p$ be a prime number. The generators of $\Ecal$ are $\langle x^j +1 \rangle$ for all $j$'s such that  $(p, j) =1$ and $j < \ell$.
\end{lemma}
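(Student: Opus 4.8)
The plan is to understand the structure of the type~II group $\Ecal$ concretely and then exhibit the claimed elements as a generating set. Recall that for type~II, two monic polynomials are equivalent iff their first $\ell$ leading coefficients agree, equivalently $\tilde f \equiv \tilde g \pmod{x^{\ell+1}}$, and each class has a unique representative of degree $\ell$. Since $f_0 = 1$ always, an element of $\Ecal$ is determined by the reduced reciprocal $1 + c_1 z + \cdots + c_{\ell-1} z^{\ell-1} \pmod{x^{\ell+1}}$, i.e. by a unit in the truncated polynomial ring $R := \fp[z]/(z^\ell)$ whose constant term is $1$. First I would make this identification precise: the map $\langle f\rangle \mapsto \tilde f \bmod z^\ell$ is a group isomorphism from $\Ecal$ onto the group $U_1 := 1 + z R$ of principal units of $R$ (the reciprocal of a product is the product of reciprocals up to the relevant truncation, which matches the group law $\langle f\rangle\langle g\rangle = \langle fg\rangle$). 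Here I use $q=p$ so that the coefficient ring is the prime field $\fp$.

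Once $\Ecal \cong U_1 = 1 + zR$, the candidate generators $\langle x^j + 1\rangle$ correspond to the principal units $1 + z^j$ for $1 \le j < \ell$. The key step is then a clean structural fact about the group $U_1$ of principal units of $\fp[z]/(z^\ell)$: it is generated by the elements $1 + z^j$ with $(j,p)=1$, $1 \le j < \ell$. I would prove this by the standard filtration/logarithm argument. Consider the descending chain $U_i := 1 + z^i R$; each quotient $U_i/U_{i+1}$ is isomorphic to the additive group $(\fp,+)$ via $1 + a z^i \mapsto a$, and $U_1$ is the extension of these successive layers. The generators $1+z^j$ with $p \nmid j$ let us reach every layer: indeed raising $1+z^j$ to the $p^m$-th power gives, modulo higher terms, $1 + z^{j p^m}$ (since $(1+z^j)^p \equiv 1 + z^{jp} \pmod{p, z^{\cdots}}$ in characteristic $p$), so the $p$-power closure of $\{1+z^j : p\nmid j,\ j<\ell\}$ hits $1 + z^{j p^m}$ for every index $jp^m < \ell$. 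Every integer $1\le i < \ell$ factors uniquely as $i = j p^m$ with $p\nmid j$, so every layer index is reached.

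The main obstacle is the bookkeeping of the higher-order error terms: $(1+z^j)^{p^m}$ is only congruent to $1 + z^{jp^m}$ modulo terms of strictly higher $z$-degree, so one cannot produce the $1 + z^i$ layer generators exactly, only up to elements of $U_{i+1}$. I would organize this with a downward induction on the layer: assuming the subgroup $H$ generated by the candidate set already contains representatives surjecting onto $U_{i+1}/U_{i+2}, \ldots$, I show it surjects onto $U_i/U_{i+1}$ as well, and then conclude $H = U_1$ because a subgroup of the finite nilpotent (in fact $p$-) group $U_1$ that surjects onto every successive filtration quotient must be the whole group. One must also check these generators are necessary in the sense intended — i.e. that one genuinely needs the indices coprime to $p$ and nothing is gained by including $j$ with $p \mid j$ (those are redundant, being $p$-powers of smaller-index units up to higher terms), which follows from the same filtration analysis. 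I would close by noting the count is consistent: the number of such $j$ equals $\#\{1\le j<\ell : p\nmid j\}$, matching the minimal number of cyclic factors of the abelian $p$-group $U_1$ of order $p^{\ell-1}$.
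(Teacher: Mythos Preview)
Your filtration approach via the principal units $U_1$ is sound and is the standard way to handle this structure, but there is an off-by-one slip in your identification of $\Ecal$. For type~II the equivalence is $\tilde f\equiv \tilde g\pmod{x^{\ell+1}}$, so an element of $\Ecal$ is determined by $1+f_1z+\cdots+f_\ell z^\ell$, not $1+c_1z+\cdots+c_{\ell-1}z^{\ell-1}$. Thus the correct ring is $R=\fp[z]/(z^{\ell+1})$, the group $U_1=1+zR$ has order $p^\ell$ (consistent with Proposition~\ref{group}), and the candidate generators must be $1+z^j$ for $1\le j\le \ell$, $(j,p)=1$. The lemma as stated in the paper has the same typo ($j<\ell$ should read $j\le\ell$): compare Examples~\ref{ex:Q2L3} and the $q=2,\ell=5$ example, where $\langle x^3+1\rangle$ and $\langle x^5+1\rangle$ appear as generators with $j=\ell$. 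With this correction your argument goes through unchanged.

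One simplification: you worry that $(1+z^j)^{p^m}$ equals $1+z^{jp^m}$ only modulo higher terms, but in characteristic $p$ this is an exact identity in $\fp[z]$ by the freshman's dream. So there are no error terms to manage for the $p$-power steps; the filtration induction is needed only to produce $1+cz^i$ for $c\ne 1$ from powers of $1+z^i$ and elements of $U_{i+1}$.

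As for comparison: the paper's proof argues differently. It first asserts that $\langle x^j+1\rangle$ has order $p^{s_j}$ with $s_j$ the least integer satisfying $jp^{s_j}>\ell$ (this is immediate from $(1+z^j)^{p^m}=1+z^{jp^m}$), then observes that every integer in $[\ell]$ is uniquely of the form $jp^t$ with $(j,p)=1$ and $t<s_j$, so $\sum_j s_j=\ell$ and the product of the orders equals $|\Ecal|$; finally it claims the cyclic subgroups are independent, yielding a direct-product decomposition. Your filtration argument is more self-contained and actually \emph{proves} generation rather than asserting independence; the paper's route is shorter but leaves the independence step (why the subgroup generated by the $\langle x^j+1\rangle$ is the internal direct product of the cyclic pieces) implicit.
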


\begin{proof}
Obviously,  the  order of  $\langle x^j +1 \rangle$ is   $p^{s_i}$ such that
 $s_j$ is the smallest  positive integer such that $jp^{s_j} > \ell$. In another word, $p^{s_j-1}$ is the largest power of $p$ such that $j p^{s_j-1} \leq \ell$.  Hence $\langle x^{jp^{t}} +1 \rangle$ are all distinct for $t=1, \ldots, s_j-1$. Since each integer in $[\ell]$ can be written uniquely as $jp^{t_j}$
 for some $j$ and $t_j$ satisfying $j<\ell$, $(j,p)=1$ and $t_j<s_j$, we have $\sum_{(p, j)=1, j < \ell} s_j = \ell$.  Moreover,   each of  $\langle x^j +1 \rangle$  generates a different subgroup of order $p^{s_j}$.  \qed
\end{proof}

\medskip
\begin{example}\label{ex:Q2L3} Consider $q=2,\ell=3$.
This is also treated in \cite{YucMul04} and some complicated expressions are given there. By Lemma~\ref{lemma:generator},   the group $\Ecal$ is isomorphic to $C_4\times C_2$ where $C_4$ and $C_2$ are generated  by $\xi_1$ and $\xi_2$, where $\xi_1=\langle x+1\rangle$  has order 4 and $\xi_2=\langle x^3+1\rangle$ has order 2.  In this case,
$\Ecal_1 = \{ \langle 1\rangle, \xi_1 \}$ and $\Ecal_2 = \{ \langle 1\rangle, \xi_1, \xi_1^2, \xi_1^3 \xi_2 \}$.

Using \eqref{eq:cdB},  we have
\begin{align*}
c_{1, \vec{j}}&=1+i^{j_1},\\
c_{2, \vec{j}}&=1+i^{j_1}+i^{2j_1}+i^{3j_1}(-1)^{j_2},\\
P_{j_1,j_2}(z)&=1+c_{1, \vec{j}}z+c_{2, \vec{j}}z^2.
\end{align*}

We have $P_{j_1,j_2}(z)={\bar P}_{4-j_1,j_2}(z)$, and
\begin{align*}
P_{1,1}(z)&=1+(1+i)z+2iz^2,&\\
P_{1,2}(z)&=1+(1+i)z,&\\
P_{2,1}(z)&=1+2z^2,&(\delta_{2,2})\\
P_{2,2}(z)&=1,&\\
P_{4,1}(z)&=1+2z+2z^2, &(\delta_{2,1})\\
P_{1,2}(z)P_{3,2}(z)&=1+2z+2z^2,&(\delta_{2,1}) \\
P_{1,1}(z)P_{3,1}(z)&=1+2z+2z^2+4z^3+4z^4.&(\delta_{4,1})
\end{align*}
We note that the polynomials $\delta_{i,j}(X)$ in \cite[Theorem~6]{Gra19} are the reciprocals of our corresponding polynomials. For comparison purpose, we list the corresponding $\delta_{i, j}$ on the right side.

When the exponent of $\xi_1$ is even, we may multiply conjugate pairs together to obtain (using Granger's notation)
\begin{align}
N_n(\xi_1^{2s_1}\xi_2^{s_2})
&=2^{n-3}+(-1)^{s_2}\frac{n}{8}[z^n]\ln(1+2z^2)\nonumber\\
&~+\left((-1)^{s_2}+(-1)^{s_1}\right)\frac{n}{8}[z^n]\ln(1+2z+2z^2)\nonumber\\
&~+(-1)^{s_1+s_2}\frac{n}{8}[z^n]\ln (1+2z+2z^2+4z^3+4z^4)\nonumber\\
&=2^{n-3}-\frac{\llbracket 2\mid n\rrbracket}{4}(-1)^{s_2} (-2)^{n/2}\\
&~-\frac{1}{8}\left((-1)^{s_2}+(-1)^{s_1}\right)\rho_n(\delta_{2,1})-\frac{1}{8}(-1)^{s_1+s_2}\rho_n(\delta_{4,1}).\nonumber
\end{align}
More explicitly, we have
\begin{align*}
N_n(\xi_1^{0}\xi_2^{0})&=F_2(n,0,0,0)\\
&=2^{n-3}-\frac{\llbracket 2\mid n\rrbracket}{4}(-2)^{n/2}-\frac{1}{8}\left(2\rho_n(\delta_{2,1})+\rho_n(\delta_{4,1})\right),  \\
N_n(\xi_1^{0}\xi_2^{1})&=F_2(n,0,0,1)\\
&=2^{n-3}+\frac{\llbracket 2\mid n\rrbracket}{4}(-2)^{n/2}+\frac{1}{8}\rho_n(\delta_{4,1}),  \\
N_n(\xi_1^{2}\xi_2^{0})&=F_2(n,0,1,0)\\
&=2^{n-3}-\frac{\llbracket 2\mid n\rrbracket}{4}(-2)^{n/2}+\frac{1}{8}\rho_n(\delta_{4,1}),  \\
N_n(\xi_1^{2}\xi_2^{1})&=F_2(n,0,1,1)\\
&=2^{n-3}+\frac{\llbracket 2\mid n\rrbracket}{4}(-2)^{n/2}-\frac{1}{8}\left(-2\rho_n(\delta_{2,1})+\rho_n(\delta_{4,1})\right).
\end{align*}
This immediately implies the corresponding four expression in \cite[Theorem~6]{Gra19}. Moreover, we obtain expressions for even degrees.

Since we have simple expressions for all the roots, we may obtain a more compact expression for $N_n(\xi_1^{t_1}\xi_2^{t_2})$ as follows.
\begin{align*}
N_n(\xi_1^{t_1}\xi_2^{t_2})&=2^{n-3}-2^{-3}\sum_{ (j_1,j_2)\ne (4,2)}i^{-j_1t_1}(-1)^{-j_2t_2}\sum_{\rho\in \Scal_{j_1,j_2}}\rho^{-n}\\
&=2^{n-3}+
 \frac{ (-1)^{n-1}}{4}\Re\left({i}^{-t_1}(1+i)^n\right)\\
&+\frac{1}{8}(-1)^{t_2}\sum_{j=1}^4{i}^{-jt_1}f_n(j),
\end{align*}
where
\begin{align}
f_n(j)&=n[z^n]\ln P_{j,1}(z).
\end{align}
Note that
\begin{align*}
f_n(1)&=n[z^n]\ln(1+(1+i)z+2iz^2)
\\&=n[z^n]\ln\left(\frac{1-(1+i)^3z^3}{1-(1+i)z}\right)
\\&=n[z^n]\left(\ln\left(\frac{1}{1-(1+i)z}\right)-\ln\left(\frac{1}{1-(1+i)^3z^3}\right)\right)
\\&=(1+i)^n(1-3\llbracket 3\mid n \rrbracket),\\
f_n(2)&=n[z^n]\ln(1+2z^2)=2(-1)(-2)^{n/2}\llbracket 2\mid n\rrbracket,
\\f_n(3)&=\Bar{f_n}(1)=(1-i)^n(1-3\llbracket 3\mid n \rrbracket),\\
f_n(4)&=n[z^n]\ln(1+2z+2z^2)
\\&=n[z^n]\ln((1+(1+i)z)(1+(1-i)z))
\\&=n[z^n](\ln(1+(1+i)z)+\ln(1+(1-i)z))
\\&=2(-1)^{n-1}\Re\left((1+i)^n\right).
\end{align*}
Thus
\begin{align}
N_n(\xi_1^{t_1}\xi_2^{t_2})&=2^{n-3}+\frac{ 1}{4}(-1)^{n-1}\Re\left({i}^{-t_1}(1+i)^n\right)\nonumber\\
&~~+\frac{\llbracket 2\mid n\rrbracket}{4}(-1)^{t_2+t_1-1}(-2)^{n/2}\nonumber\\
&~~+\frac{1-3\llbracket 3\mid n\rrbracket}{4}(-1)^{t_2}\Re\left(i^{-t_1}(1+i)^n\right)\nonumber\\
&~~+\frac{1}{4}(-1)^{t_2+n-1}\Re\left((1+i)^n\right)\nonumber\\
&=2^{n-3}+\frac{\llbracket 2\mid n\rrbracket}{4}(-1)^{t_2+t_1-1}(-2)^{n/2}\label{eq:ex2}\\
&~~+\frac{1}{4}(-1)^{t_2+n-1}2^{n/2}\cos\left(\frac{n\pi}{4}\right)\nonumber\\
&~~+\left(\frac{1}{4}(-1)^{n-1}+\frac{1-3\llbracket 3\mid n\rrbracket}{4  }(-1)^{t_2} \right)2^{n/2}\cos\left(\frac{n\pi}{4}-\frac{t_1\pi}{2}\right).\nonumber
\end{align}
Expression~\eqref{eq:ex2} gives a complete solution to the case $q=2$ and $\ell=3$, which improves \cite[Theorem~6]{Gra19}.
\end{example}

\begin{example}\label{ex:Q2L4} Consider $q=2,\ell=4$. By Lemma~\ref{lemma:generator},
The generators are $\xi_1=\langle x+1\rangle$  and $\xi_2=\langle x^3+1\rangle$ of orders 8 and 2, respectively. Hence
\begin{align*}
\Ecal_1 &= \{ \langle 1\rangle, \xi_1 \},\\
\Ecal_2 &=\Ecal_1 \cup \{\xi_1^2,\xi_1^7\xi_2\},\\
\Ecal_3 &= \Ecal_2 \cup \{\xi_1^2\xi_2, \xi_1^3, \xi_1^5\xi_2 ,\xi_2\}.
\end{align*}

Using \eqref{eq:cdB},  we have
\begin{align*}
c_{1, (j_1,j_2)}&=1+\om_8^{j_1},\\
c_{2, (j_1,j_2)}&=c_{1, (j_1,j_2)}+\om_8^{2j_1}+\om_8^{7j_1}(-1)^{j_2},\\
c_{3, (j_1,j_2)}&=c_{2, (j_1,j_2)}+\om_8^{2j_1}(-1)^{j_2}+\om_8^{3j_1}+\om_8^{5j_1}(-1)^{j_2}+(-1)^{j_2},\\
P_{j_1,j_2}(z)&=1+c_{1, (j_1,j_2)}z+c_{2,(j_1,j_2)}z^2+c_{3,(j_1,j_2)}z^3.
\end{align*}

Using Maple, we obtain (for comparison purpose, we list the corresponding $\delta_{j, k}$ from \cite{Gra19})
\begin{align*}
P_{1,1}(z)&=1+\frac{2+(1+i)\sqrt{2}}{2}z+(1+i(1+\sqrt{2}))z^2+2i\sqrt{2}z^3,&\\
P_{2,1}(z)&=1+(1+i)z+2iz^2,&\\
P_{3,1}(z)&=1+\frac{2+(-1+i)\sqrt{2}}{2}z+(1-i(1+\sqrt{2}))z^2+2i\sqrt{2}z^3,&\\
P_{4,1}(z)&=1+2z^2,&(\delta_{2,2})\\
P_{8,1}(z)&=1+2z+2z^2,&(\delta_{2,1})\\
P_{1,2}(z)&=1+\frac{2+(1+i)\sqrt{2}}{2}z+(1+i+\sqrt{2})z^2+(2+2i)z^3,&\\
P_{2,2}(z)&=1+(1+i)z,&\\
P_{3,2}(z)&=1+\frac{2+(-1+i)\sqrt{2}}{2}z+(-1+i+\sqrt{2})z^2+(2-2i)z^3,&\\
P_{4,2}(z)&=1,&\\
P_{2,1}(z)P_{6,1}(z)&=1+2z+2z^2+4z^3+4z^4,&(\delta_{4,1})\\
P_{2,2}(z)P_{6,2}(z)&=1+2z+2z^2,&(\delta_{2,1})
\end{align*}
and
\begin{align*}
&~~~P_{1,1}(z)P_{7,1}(z)P_{3,1}(z)P_{5,1}(z)&\\
&=(16z^8 + 8z^6 + 8z^5 + 2z^4 + 4z^3 + 2z^2 + 1)(2z^2 + 2z + 1)^2,&(\delta_{8,2},\delta_{2,1}^2)\\
&~~~P_{1,2}(z)P_{7,2}(z)P_{3,2}(z)P_{5,2}(z)&\\
&=(16z^8 +32z^7 +24z^6 +8z^5+2z^4 + 4z^3 +6z^2 +4z+ 1)(2z^2 + 1)^2. &(\delta_{8,1}, \delta_{2,2}^2)
\end{align*}
When the exponent of $\xi_1$ is a multiple of 4, we may combine the conjugate pairs together to obtain (using Granger's notation)
\begin{align}
&~~~~N_n(\xi_1^{4s_1}\xi_2^{s_2})-2^{n-4}\nonumber\\
&=-\frac{1}{16}\left(1+2(-1)^{s_1+s_2}+(-1)^{s_2}\right)\rho_n(\delta_{2,1})
-\frac{1}{16}\left(2(-1)^{s_1}+(-1)^{s_2}\right)\rho_n(\delta_{2,2})\nonumber\\
&~~~-\frac{1}{16}(-1)^{s_2}\rho_n(\delta_{4,1})
-\frac{1}{16}(-1)^{s_1+s_2}\rho_n(\delta_{8,2})
-\frac{1}{16}(-1)^{s_1}\rho_n(\delta_{8,1}).
\end{align}
Setting $(s_1,s_2)=(0,0),(0,1),(1,0),(1,1)$, we obtain
 \cite[Theorem~8]{Gra19}.

\end{example}

\medskip

\begin{example}
Consider $q=3, \ell=3$. It is easy to check that $\xi_1=\langle x+1\rangle$  has order 9 and $\xi_2=\langle x^2+1\rangle$ has order 3. The group $\Ecal$ is isomorphic to $C_9\times C_3$ with generators $\xi_1$ and $\xi_2$. We  have
\begin{align*}
\Ecal_1&=\{\langle 1\rangle,\xi_1,\xi_1^8\xi_2^2\},\\
\Ecal_2&=\Ecal_1 \cup \{\xi_1^2,\xi_2,\xi_2^2,\xi_1^4\xi_2^2,\xi_1^5\xi_2,\xi_1^7\xi_2\}.
\end{align*}
Using $\om_3=\om_9^3$, we obtain
\begin{align*}
c_{1,(j_1,j_2)}&=1+\om_9^{j_1}+\om_9^{8j_1+6j_2},   \\
c_{2,(j_1,j_2)}&=c_{1,(j_1,j_2)}
+\om_9^{2j_1}+\om_9^{3j_2}+\om_9^{6j_2}+\om_9^{4j_1+6j_2}
+\om_9^{5j_1+3j_2}+\om_9^{7j_1+3j_2},\\
P_{j_1,j_2}(z)&=1+c_{1,(j_1,j_2)}z+c_{2,(j_1,j_2)}z^2.
\end{align*}
We have $P_{j_1,j_2}(z)={\bar P}_{9-j_1,3-j_2}(z)$ and
\begin{align*}
P_{1,1}(z)&=P_{5,1}(z)=1+(1+\om_9+\om_9^5)z+3\om_9z^2,\\
P_{2,1}(z)&=P_{4,1}(z)=1+(1+\om_9^2+\om_9^4)z+3\om_9^4z^2,\\
P_{3,1}(z)&=1+i\sqrt{3}z,\\
P_{6,1}(z)&=P_{9,1}(z)=1+\frac{3-i\sqrt{3}}{2}z,\\
P_{7,1}(z)&=P_{8,1}(z)=1+(1+\om_9^7+\om_9^8)z+3\om_9^7z^2,\\
P_{1,3}(z)&=1+(1+\om_9+\om_9^8)z+3z^2,\\
P_{2,3}(z)&=1+(1+\om_9^2+\om_9^7)z+3z^2,\\
P_{3,3}(z)&=1,\\
P_{4,3}(z)&=1+(1+\om_9^4+\om_9^5)z+3z^2.
\end{align*}
Using Maple, we obtain
\begin{align*}
&~~~\prod_{(j_1,j_2)\ne(9,3)}P_{j_1,j_2}(z)&\\
&=(1+3z^2)\left(1+3z+3z^2\right)^2&(\eps_{2,2},  \eps^2_{2,3})\\
&~\times\left(1+3z+9z^2+15z^3+27z^4+27z^5+27z^6\right)^2&(\eps^2_{6})\\
&~\times \Big(1+6z+18z^2+39z^3+63z^4+81z^5+117z^6+243z^7& \\
&~~~+567z^8+1053z^9+1458z^{10}+1458z^{11}+729z^{12}\Big)^2. &(\eps^2_{12,4})
\end{align*}
It follows from Corollary~\ref{cor:trivial} that
\begin{align}\label{eq:Q3L3}
N_n(0,0,0)&=2^{n-3}-\frac{1}{27}\Big(\rho_n(\eps_{2,2})+2\rho_n(\eps_{2,3})
 +2\rho_n(\eps_{6}) +2\rho_n(\eps_{12,4})\Big).
\end{align}

We note that this formula is much simpler than the one given in \cite[Theoem~14]{Gra19}.

\end{example}

Finally we include an example with three generators.
\begin{example}
Consider $q=2, \ell=5$. By Lemma~\ref{lemma:generator}, the generators are $\xi_1=\langle x+1\rangle$, $\xi_2=\langle x^3+1\rangle$ and $\xi_3=\langle x^5+1\rangle$, of orders 8, 2, and 2, respectively.
We have
\begin{align}
\Ecal_1&=\{\langle 1\rangle,\xi_1\}, \nonumber \\
\Ecal_2&=\Ecal_1\cup \{ \xi_1^2,\xi_1^7\xi_2\},\nonumber \\
\Ecal_3&=\Ecal_2 \cup\{\xi_2,\xi_1^2\xi_2\xi_3,\xi_1^3,\xi_1^5\xi_2\xi_3\},\nonumber \\
\Ecal_4&=\Ecal_3\cup\{\xi_1\xi_2,\xi_1^3\xi_2\xi_3,\xi_1^4,\xi_1^4\xi_2,\xi_1^5\xi_3,\xi_1^6,\xi_1^6\xi_2\xi_3,
\xi_1^7\xi_3\}.
\end{align}
Using \eqref{eq:cdB},  we have
\begin{align*}
c_{1, (j_1,j_2,j_3)}&=1+\om_8^{j_1},\\
c_{2, (j_1,j_2,j_3)}&=c_{1, (j_1,j_2,j_3)}+\om_8^{2j_1}+\om_8^{7j_1}(-1)^{j_2},\\
c_{3, (j_1,j_2)}&=c_{2, (j_1,j_2,j_3)}+\om_8^{2j_1}(-1)^{j_2+j_3}+\om_8^{3j_1}+\om_8^{5j_1}(-1)^{j_2+j_3}+(-1)^{j_2},\\
c_{4, (j_1,j_2)}&=c_{3, (j_1,j_2,j_3)}+\om_8^{j_1}(-1)^{j_2}
+\om_8^{3j_1}(-1)^{j_2+j_3}+\om_8^{4j_1}+\om_8^{4j_1}(-1)^{j_2}\\
&~~~+\om_8^{5j_1}(-1)^{j_3}+\om_8^{6j_1}+\om_8^{6j_1}(-1)^{j_2+j_3}+\om_8^{7j_1}(-1)^{j_3},\\
P_{j_1,j_2,j_3}(z)&=1+c_{1, (j_1,j_2,j_3)}z+c_{2,(j_1,j_2,j_3)}z^2+c_{3,(j_1,j_2,j_3)}z^3++c_{4,(j_1,j_2,j_3)}z^4.
\end{align*}
Using Maple, we obtain
\begin{align*}
P_{8,0,1}(z)&=(1+2z^2)(1+2z+2z^2),&(\delta_{2,2},\delta_{2,1})\\
P_{8,1,0}(z)&=1+2z+2z^2,&(\delta_{2,1})\\
P_{8,1,1}(z)&=1+2z+2z^2+4z^3+4z^4, &(\delta_{4,1})\\
P_{4,0,0}(z)&=1,&\\
P_{4,0,1}(z)&=(1+2z+2z^2)(1-2z+2z^2),&(\delta_{2,1},\delta_{2,3})\\
P_{4,1,0}(z)&=1+2z^2,&(\delta_{2,2})\\
P_{4,1,1}(z)&=1+2z^2+4z^4,&(\delta_{4,2})\\
P_{2,0,0}(z)P_{6,0,0}(z)&=1+2z+2z^2,&(\delta_{2,1})\\
P_{2,0,1}(z)P_{6,0,1}(z)&=(1+2z+2z^2)(1-2z+2z^2)(1+2z+2z^2+4z^3+4z^4),&(\delta_{4,1},\delta_{2,1},\delta_{2,3})\\
P_{2,1,0}(z)P_{6,1,0}(z)&=1+2z+2z^2+4z^3+4z^4,&(\delta_{4,1})\\
P_{2,1,1}(z)P_{6,1,1}(z)&=16z^8 + 16z^7 + 8z^6 - 4z^4 + 2z^2 + 2z + 1,&(\delta_{8,3})
\end{align*}
and
\begin{align*}
&~~~P_{1,0,0}(z)P_{7,0,0}(z)P_{3,0,0}(z)P_{5,0,0}(z) &\\
&=(16z^8 +32z^7 +24z^6 +8z^5+2z^4 + 4z^3 +6z^2 +4z+ 1)(2z^2 + 1)^2, &(\delta_{8,1},\delta_{2,2}^2)\\
&~~~P_{1,0,1}(z)P_{7,0,1}(z)P_{3,0,1}(z)P_{5,0,1}(z) &\\
&=(16z^8 +8z^6-8z^5+2z^4 -4z^3 +2z^2 + 1)(1+2z+2z^2+4z^3+4z^4)^2,&(\delta_{8,4},\delta_{4,1}^2)\\
&~~~P_{1,1,0}(z)P_{7,1,0}(z)P_{3,1,0}(z)P_{5,1,0}(z)&\\
&=(16z^8 + 8z^6 + 8z^5 + 2z^4 + 4z^3 + 2z^2 + 1)(1+2z+2z^2)^2,&(\delta_{8,2},\delta_{2,1}^2),\\
&~~~P_{1,1,1}(z)P_{7,1,1}(z)P_{3,1,1}(z)P_{5,1,1}(z) &\\
&=(16z^8 +32z^7 +24z^6 +8z^5+2z^4 + 4z^3 +6z^2 +4z+ 1)(1+2z^2 +4z^4)^2. &(\delta_{8,1},\delta_{4,2}^2)
\end{align*}
By combining conjugate pairs as before, we obtain (using the notation from \cite{Gra19})\\
\begin{align}
&~~~~N_n(\xi_1^{4s_1}\xi_2^{s_2}\xi_3^{s_3})-2^{n-5}\nonumber\\
&=-\frac{1}{32}(-1)^{s_3}\Big(\rho_n(\delta_{2,2})+\rho_n(\delta_{2,1})\Big)- \frac{1}{32}(-1)^{s_2}\rho_n(\delta_{2,1})-\frac{1}{32}(-1)^{s_2+s_3}\rho_n(\delta_{4,1})\nonumber\\
&~~~-\frac{1}{32}(-1)^{s_3}\Big(\rho_n(\delta_{2,3})+\rho_n(\delta_{2,1})\Big)- \frac{1}{32}(-1)^{s_2}\rho_n(\delta_{2,2})- \frac{1}{32}(-1)^{s_2+s_3}\rho_n(\delta_{4,2})\nonumber\\
&~~~- \frac{1}{32}\rho_n(\delta_{2,1})
-\frac{1}{32}(-1)^{s_3}\Big(\rho_n(\delta_{4,1})+\rho(\delta_{2,3})+\rho_n(\delta_{2,1})\Big)
- \frac{1}{32}(-1)^{s_2}\rho_n(\delta_{4,1})\nonumber\\
&~~~- \frac{1}{32}(-1)^{s_2+s_3}\rho_n(\delta_{8,3})
-\frac{1}{32}(-1)^{s_1}\Big(\rho_n(\delta_{8,1})+2\rho_n(\delta_{2,2})\Big)\nonumber\\
&~~~-\frac{1}{32}(-1)^{s_1+s_3}\Big(\rho_n(\delta_{8,4})+2\rho_n(\delta_{4,1})\Big)
-\frac{1}{32}(-1)^{s_1+s_2}\Big(\rho_n(\delta_{8,2})+2\rho_n(\delta_{2,1})\Big)\nonumber\\
&~~~-\frac{1}{32}(-1)^{s_1+s_2+s_3}\Big(\rho_n(\delta_{8,1})+2\rho_n(\delta_{4,2})\Big).
\end{align}
Thus
\begin{align*}
&~~~~N_n(\xi_1^{4s_1}\xi_2^{s_2}\xi_3^{s_3})-2^{n-5}\\
&=-\frac{1}{32}\left(1+(-1)^{s_3}+(-1)^{s_2}+2(-1)^{s_3}+2(-1)^{s_1+s_2}\right)\rho_n(\delta_{2,1})\\
&~~~-\frac{1}{32}\left((-1)^{s_3}+(-1)^{s_2}+2(-1)^{s_1}\right)\rho_n(\delta_{2,2})
-\frac{2}{32}(-1)^{s_3}\rho_n(\delta_{2,3})\\
&~~~-\frac{1}{32}\left((-1)^{s_2+s_3}+(-1)^{s_3}+(-1)^{s_2}+2(-1)^{s_1+s_3}\right)\rho_n(\delta_{4,1})\\
&~~~-\frac{1}{32}\left((-1)^{s_2+s_3}+2(-1)^{s_1+s_2+s_3}\right)\rho_n(\delta_{4,2})\\
&~~~-\frac{1}{32}\left((-1)^{s_1}+ (-1)^{s_1+s_2+s_3}\right)\rho_n(\delta_{8,1})
-\frac{1}{32}(-1)^{s_1+s_2}\rho_n(\delta_{8,2})\\
&~~~-\frac{1}{32}(-1)^{s_2+s_3}\rho_n(\delta_{8,3})
-\frac{1}{32}(-1)^{s_1+s_3}\rho_n(\delta_{8,4}).
\end{align*}

This immediately implies \cite[Theorem~12]{Gra19} by taking $(s_1,s_2,s_3)=(0,0,0)$, $(1,0,0)$,$(0,0,1)$,$(1,0,1)$, respectively.
\end{example}

\section{Conclusion} \label{conclusion}

Through the study of the group of equivalent classes of monic irreducible polynomials with prescribed coefficients,  we obtain general expressions for the generating functions  of the number of monic irreducible polynomials with prescribed coefficients over finite fields.  Explicit formulae can be obtained accordingly. We demonstrate our recipe using several concrete examples  and compare our expressions with previous known results.

\end{document}